\newcommand{\sh}{\mbox{\usefont{T2A}{\rmdefault}{m}{n}\cyrsh}}
\newcommand{\sch}{\mbox{\usefont{T2A}{\rmdefault}{m}{n}\cyrshch}}
\newcommand{\comment}[1]{}
\newcommand{\af}{\mathsf{a}}
\numberwithin{equation}{section}
\theoremstyle{plain}
\newtheorem{theorem}{Theorem}[section]
\newtheorem*{theorem-main}{Theorem}
\newtheorem{lemma}[theorem]{Lemma}
\newtheorem{prop}[theorem]{Proposition}
\theoremstyle{definition}
\newtheorem{example}[theorem]{Example}
\theoremstyle{remark}
\def\pf{\begin{proof}}
\def\epf{\end{proof}}
\newcommand{\fkdos}{\hspace{-1pt}k+\frac{1}{2}}
\newcommand{\fudos}{\hspace{-1pt}\frac{3}{2}}
\newcommand{\ba}{ \mathbf{a}}
\newcommand{\bm}{ \mathbf{m}}
\newcommand{\bn}{ \mathbf{n}}
\newcommand{\ku}{ \Bbbk}
\newcommand{\kut}{ \ku^{\times}}
\newcommand{\G}{\mathbb G}
\newcommand{\ghost}{\mathscr{G}}
\newcommand{\I}{\mathbb I}
\newcommand{\Iw}{\mathbb I^{\dagger}}
\newcommand{\Idd}{\mathbb I^{\ddagger}}
\newcommand{\N}{\mathbb N}
\newcommand{\bq}{\mathbf{q}}
\newcommand{\Z}{\mathbb Z}
\newcommand{\zt}{\Z^{\theta}}
\renewcommand{\_}[1]{_{\left( #1 \right)}}
\newcommand{\Ss}{{\mathcal S}}
\newcommand{\cV}{\mathcal{V}}
\newcommand{\lstr}{\mathfrak L}
\newcommand{\pos}{\mathfrak P}
\newcommand{\eny}{\mathfrak E}
\newcommand\ad{\operatorname{ad}}
\newcommand{\car}{\operatorname{char}}
\newcommand{\Der}{\operatorname{Der}}
\newcommand{\id}{\operatorname{id}}
\newcommand{\Hom}{\operatorname{Hom}}
\newcommand{\ord}{\operatorname{ord}}
\newcommand{\lcm}{\operatorname{lcm}}
\def\ydh{{}^{H}_{H}\mathcal{YD}}
\newcommand{\NA}{\mathscr{B}}
\newcommand{\toba}{\mathscr{B}}
\newcommand{\ot}{\otimes}
\newcommand{\ydG}{{}^{\ku \Gamma }_{\ku \Gamma }\mathcal{YD}}
\newcommand{\ydU}{{}^{\ku \Upsilon }_{\ku \Upsilon }\mathcal{YD}}
\newcounter{tabla}\stepcounter{tabla}
\begin{document}

\title[Pointed Hopf algebras in characteristic $2$]{Examples of finite-dimensional pointed Hopf algebras in characteristic $2$}

\author[Andruskiewitsch, Bagio, Della Flora, Fl\^ores]
{Nicol\'as Andruskiewitsch, Dirceu Bagio, Saradia Della Flora, Daiana Fl\^ores}

\address{FaMAF-Universidad Nacional de C\'ordoba, CIEM (CONICET),
\newline Medina A\-llen\-de s/n, Ciudad Universitaria, 
\newline (5000) C\' ordoba, Rep\'ublica Argentina.} \email{andrus@famaf.unc.edu.ar}

\address{Departamento de Matem\'atica, Universidade Federal de Santa Maria, \newline 
97105-900, Santa Maria, RS, Brazil} \email{bagio@smail.ufsm.br, saradia.flora@ufsm.br, flores@ufsm.br}

\thanks{\noindent 2010 \emph{Mathematics Subject Classification.}
16T20, 17B37. \newline This material is based upon work supported by the National Science Foundation under
Grant No. DMS-1440140 while N. A. was in residence at the Mathematical Sciences
Research Institute in Berkeley, California, in the Spring 2020 semester.
The work of N. A. was partially supported by CONICET,
Secyt (UNC) and  the Alexander von Humboldt Foundation
through the Research Group Linkage Programme}

\begin{abstract}
We present new examples of finite-dimensional Nichols algebras over fields of characteristic 2 from braided vector spaces that are not of diagonal type, admit realizations as Yetter-Drinfeld modules over finite abelian groups and are analogous to  Nichols algebras of finite Gelfand-Kirillov dimension in characteristic 0.
New finite-dimensional pointed Hopf algebras over fields of characteristic 2 are obtained by bosonization with group algebras of suitable finite abelian groups.
\end{abstract}

\maketitle

\section{Introduction}
The goal of this paper is to present new examples of 
finite-dimensional Hopf algebras in characteristic 2,
which are pointed, non-commutative and non-cocommutative.
Following the usual guidelines of the lifting method, we focus on finite-dimensional Nichols algebras,
then the Hopf algebras are obtained routinely by bosonization.
The main result of \cite{aah-triang} (in characteristic 0)
is the classification of the Nichols algebras with finite Gelfand-Kirillov dimension arising from braided vector spaces $(V,c)$ that decompose as
\begin{align*}
V &=  V_{1} \oplus \dots \oplus V_t \oplus  V_{t + 1} \oplus \dots \oplus V_\theta,
&
c(V_i \otimes V_j) &=  V_j \otimes V_i,\, i,j\in \I_{\theta},
\end{align*}
where  $V_{1}, \dots, V_t$  are blocks (see \S \ \ref{subsection:yd});  
$V_{t+1}, \dots, V_{\theta}$ are points (i.e. have dimension 1);
and the braidings have a specific form, see e. g. \eqref{eq:braiding-block-point},  \eqref{eq:braiding-several-blocks-1pt}. 
This result relies on the classification in \cite{h-classif} and  assumes a Conjecture treated partially in \cite{aah-diag}, both about Nichols algebras of diagonal type. 
However in positive characteristic the classification of finite-dimensional Nichols algebras of diagonal type
is known only in rank $\leq 4$ \cite{heck-wang,wang-rank3,wang-rank4}. 
Inspired by \cite{clw} and by familiar phenomena in Lie theory in positive characteristic, 
examples of finite-dimensional Nichols algebras in odd characteristic were constructed in \cite{aah-oddchar} by
analogy with the Nichols algebras in \cite{aah-triang}--that have infinite dimension.
Here we extend these constructions assuming that the base field $\ku$ is  
algebraically closed of characteristic $2$. There are new features as $1 = -1$ now. 
For instance in characteristic 0, two main actors are the Jordan 
and the super Jordan planes. Their restricted versions in characteristic $p > 2$ have dimensions 
$p^2$ \cite{clw} and $4p^2$ \cite{aah-oddchar} respectively. When $\car \ku  =2$  they merge in the 
restricted Jordan plane that has dimension $16 = 4\times 2^2$ \cite{clw}. Other families of \cite{aah-triang}
also merge. Finally the fact that $x_i^2 = 0$ for suitable $x_i$ in the braided vector space
brings on more examples with finite dimension.
Let us present the main result of this paper.

\begin{theorem-main}
If $V$ is a braided vector space as in Table \ref{tab:main-result}, then the dimension of the Nichols algebra
$\NA(V)$ is finite.
\end{theorem-main}

\renewcommand{\arraystretch}{1.4}
\begin{table}[ht]
\caption{{\small Finite-dimensional Nichols algebra in characteristic 2}}\label{tab:main-result}
\begin{center}
\begin{tabular}{|c|c|c|c|}\hline 
$V$ & {\small $\NA(V)$}&  {\small $\dim K$} & {\small $\dim \NA(V)$}  \\ \hline 
$\lstr_{\wp}(1,1)$ & {\rm Proposition} {\rm\ref{nich-ex1}}  & $2^3$  & $2^7$ \\\hline  
$\lstr_{\wp}(1,a),\,a\neq 1$ & {\rm Proposition} {\rm \ref{nich-ex2}}  & $2^4$ & $2^8$ \\\hline 
$\,\,\pos(\bq,\ba)$, $\ba \in (\kut)^t$ &{\rm Proposition} {\rm\ref{generatores-relations-blocks-point}} & $2^{|\mathcal{A}|}$   &$2^{4t+|\mathcal{A}|}$ \\\hline
$\eny_{\wp}(1)$ & {\rm Proposition} {\rm\ref{nich-pale-1}} &$2^2$  & $2^4$ \\[.2em] \hline
$\eny_{\wp}(\omega),\,\omega\in\G'_3$ & {\rm Proposition} {\rm \ref{nich-pale-2}}   & $3^3$& $2^23^3$ \\[.2em] \hline
\end{tabular}
\end{center}
\end{table}

See \ref{subsubsection:splitting} for the meaning of $K$.
The braided vector spaces $\lstr_{\wp}(1,1)$ appear to be close to $\lstr(-1, \ghost)$ and $\lstr_{-1}(-1, \ghost)$
in \cite[Table 1]{aah-oddchar}, but
$\toba(\lstr_{\wp}(1,a))$, $a\neq 1$ has no finite-dimensional analogue in $\car \ku = p >2$.
Similarly, the algebras $\toba(\pos(\bq,\ba))$ are finite-dimensional in odd characteristic  
only when the entries of $\ba$ belong to the prime field, in contrast with characteristic 2.
Also $\eny_{\wp}(\omega)$ does not appear in the \emph{loc. cit.}
Albeit no classification is envisageable yet as the knowledge of diagonal type is still incomplete,
we present partial results in Theorems \ref{thm:point-block},  \ref{thm:points-block} and \ref{teo-paleblock}.

After spelling out some preliminaries in Section \ref{section:Preliminaries}, we devote Sections 
\ref{section:block-point}, \ref{section:block-points}, \ref{section:blocks-point} and \ref{sec:paleblock}
to Nichols algebras of one block and one point, one block and several points, several  blocks and one point,
and one pale block and one point respectively. Our proofs rely on the splitting technique 
\S\ref{subsubsection:splitting} and the classifications in \cite{heck-wang,wang-rank3,wang-rank4}.
Explicit examples of finite-dimensional pointed Hopf 
algebras are discussed in \S \ref{subsec:realization-block-pt}, \S \ref{subsec:realizations-blocks-pt},
\S \ref{subsec:realizations-paleblock-pt}. More examples by lifting will be presented in a future work.

\section{Preliminaries}\label{section:Preliminaries}

\subsection{Notations and Conventions}

We denote the natural numbers by $\N$, $\N_0=\N\cup \{0\}$. We set $\I_{k,\ell}=\{n \in \N_0: k \leq n \leq \ell\}$, $\I_{\ell}=\I_{1, \ell}$ and $\N_{\ge \ell}=\N \setminus\I_{\ell-1}$, for $k<\ell \in\N_0$.  We work over an algebraically closed field $\ku$ of characteristic $2$.
The group of $N$-th roots of unity in $\ku$ is denoted by $\G_N$; $\G'_N$ is the subset of the primitive roots of order $N$ and $\G_{\infty}=\bigcup_{N\in \N}\G_N$. 

Throughout $H$ is a Hopf algebra with bijective antipode $\Ss$. We use the notations
$G(H) =$ the group of grouplikes in $H$, $\mathcal{P}(H) =$ the space of primitive elements,
$\widehat{H} = \Hom_{\text{alg}}(H, \ku)$,
$\ydh =$ the category of Yetter-Drinfeld modules over $H$; 
see e.~g. \cite[11.6]{radford-book}.

\subsection{Yetter-Drinfeld modules}\label{subsection:yd} 
\subsubsection{Braided vector spaces}
A braided vector space $V$ is a pair $(V, c)$ where $V$ is a vector space and $c \in GL(V^{\ot 2})$ is a solution of the braid equation
\begin{align*}
(c \ot \id) (\id \ot c) (c \ot \id) &= (\id \ot c) (c \ot \id)(\id \ot c).
\end{align*}
We are interested in two classes of braided vector spaces. First,
$(V, c)$ or simply $V$ is of diagonal type if there exist a basis
$(x_i)_{i\in \I_\theta }$ of $V$ and a matrix $\bq = (q_{ij})_{i,j\in \I_\theta}$
such that $q_{ij} \in \kut$ and $c(x_i\otimes x_j)=q_{ij}x_j\otimes x_i$ for all $i,j\in \I_\theta $.
We denote in $T(V)$, or any quotient braided Hopf algebra,
\begin{align*}
x_{ij} &= (\ad_c x_i)\, x_j,& x_{i_{1}i_2 \dots i_M} &= (\ad_c x_{i_{1}})\, x_{i_2 \dots i_M},&
i,j,i_{1}, \dots, i_M &\in \I, & M&\ge 2.
\end{align*}
Second, let  $\epsilon\in \kut$ and $\ell \in \N_{\ge 2}$.
A \emph{block}  $\cV(\epsilon,\ell)$ is a braided vector space
with a basis $(x_i)_{i\in\I_\ell}$ such that for $i, j \in \I_\ell$, $j>1$:
\begin{align}\label{equation:basis-block}
c(x_i \ot  x_{1}) &= \epsilon x_{1} \ot  x_i,& c(x_i \ot  x_j) &=(\epsilon x_j+x_{j-1}) \ot  x_i.
\end{align}
For simplicity a block $\cV(\epsilon,2)$ of dimension $2$ is called an $\epsilon$-block.

\subsubsection{Realizations}\label{subsubsection:yd-ab}
Any Yetter-Drinfeld module $V$ bears a structure of brai\-ded vector space by $c(v \otimes w) = v\_{-1} \cdot w \otimes v\_{0}$,
$v,w\in V$ where $\delta(v) = v\_{-1} \otimes v\_{0}$.
The braided vector spaces  above appear as Yetter-Drinfeld modules 
in different ways called \emph{realizations}.
Let $\Gamma$ be an abelian group and let $\widehat \Gamma$ be the group of  characters  of $\Gamma$.
The Yetter-Drinfeld modules over the group algebra $\ku \Gamma$ 
are the $\Gamma$-graded $\Gamma$-modules,
the $\Gamma$-grading being denoted by $V = \oplus_{g\in \Gamma} V_g$; thus $h \cdot V_g=V_g$ for $g,h\in \Gamma$. 
If $g\in \Gamma$ and $\chi \in \widehat\Gamma$, then  the one-dimensional vector space $\ku_g^{\chi}$,
with action and coaction given by $g$ and $\chi$, is in  $\ydG$.
Given  $V \in \ydG$ with  a basis $(v_i)_{i\in I}$ where $v_i$ is homogeneous of degree $g_i$,  there are skew derivations $\partial_i$, $i\in I$, of $T(V)$ such that
\begin{align}\label{eq:skewderivations}
\partial _i(v_j) & =\delta _{ij}, \ i, j \in I,&
\partial _i(xy) &= \partial _i(x)(g_i\cdot y)+x\partial _i(y), \ x,y\in T(V).
\end{align}
More generally a \emph{YD-pair} for $H$  is a pair
$(g, \chi) \in G(H) \times \widehat{H}$ such that
\begin{align}\label{eq:yd-pair}
\chi(h)\,g  &= \chi(h\_{2}) h\_{1}\, g\, \Ss(h\_{3}),& h&\in H.
\end{align}
Let $\ku_g^{\chi}$ be a one-dimensional vector space with $H$-action and $H$-coaction  given by $\chi$ and $g$
respectively; then \eqref{eq:yd-pair} says that  $\ku_g^{\chi} \in \ydh$. 
Thus a realization of $V$ of diagonal type with matrix $\bq = (q_{ij})_{i,j\in \I_\theta}$
is just a collection $(g_{1}, \chi_{1}), \dots, (g_{\theta}, \chi_{\theta})$ such that
$q_{ij} =\chi_j (g_i)$ for all $i, j \in \I_{\theta}$.

\subsubsection{Realizations of $\epsilon$-blocks}\label{subsec:realizations-block}
For $\chi\in \widehat{H}$,  the space of $(\chi, \chi)$-deri\-vations is
\begin{align*}
\Der_{\chi,\chi}(H, \ku) &= \{\eta\in H^*: \eta(h\ell) = \chi(h)\eta(\ell) + \chi(\ell)\eta(h) \, \forall h,\ell \in H\}.
\end{align*}

The realizations of $\epsilon$-blocks are given by the notion of \emph{YD-triple} for $H$ \cite{aah-oddchar}; this 
is a collection $(g, \chi, \eta)$ where $(g, \chi)$, is a YD-pair for $H$, $\eta \in \Der_{\chi,\chi}(H, \ku)$, $\chi(g) = \epsilon$, $\eta(g) = 1$ and
\begin{align}\label{eq:YD-triple}
\eta(h) g &= \eta(h\_2) h\_{1} g \Ss(h\_3), & h&\in H.
\end{align}
Given a YD-triple $(g, \chi, \eta)$ we define $\cV_g(\chi,\eta) \in \ydh$ as the
vector space with a basis $(x_i)_{i\in\I_2}$, whose $H$-action and $H$-coaction are given by
\begin{align*}
h\cdot x_{1} &= \chi(h) x_{1},& h\cdot x_2&=\chi(h) x_2 + \eta(h)x_{1},& \delta(x_i) &= g\otimes x_i,&
h&\in H, \, i\in \I_2.
\end{align*}
Then $\cV_g(\chi, \eta)\simeq \cV(\epsilon,2)$ as braided vector spaces. 

\begin{example}
Let $\epsilon = 1$ and $\Gamma = \langle g \rangle$ be a cyclic group of order $N$. Let
$\cV$ be the vector space with a basis $(x_i)_{i\in\I_2}$ with grading $\deg x_i = g$, $i\in \I_2$.
Then the assignment $g \longmapsto \begin{pmatrix} 1 & 1 \\ 0 & 1 \end{pmatrix}$ defines a representation of $\Gamma$
(hence a structure of Yetter-Drinfeld module over $\ku \Gamma$) if and only if  $N$ is even. Thus if $\dim H < \infty$ and $H$ admits a YD-triple (for $\epsilon = 1$), then $\dim H$ is even.
\end{example}

\subsection{Nichols algebras}\label{subsection:nichols algebras} 
 Let $V \in \ydh$. The Nichols algebra of $V$ is the unique
graded connected Hopf algebra $\toba(V) = \oplus_{n\ge 0} \toba^n(V)$ in $\ydh$  
such that $V \simeq \toba^1(V) = \mathcal P(\toba(V))$ generates $\toba(V)$
as algebra. See \cite{a-leyva} for an exposition.The algebra and coalgebra underlying $\toba(V)$ depend only on the braiding.
If $V\in \ydG $ is as in \S~\ref{subsection:yd}, then
the $\partial_i$'s induce skew-derivations on $\toba(V)$.
Then $w\in \toba^k(V)$, $k\ge 1$, is $0$ if and only if
$\partial_i(w)=0$ in $\toba(V)$ for all $i\in I$.

\subsubsection{The restricted Jordan plane}\label{subsubsection:Jordan-plane}
This is the Nichols algebra of a $1$-block.

\begin{theorem}\label{thm:jordan-plane-char2} \cite{clw}
 The algebra $\toba(\cV(1,2))$ is presented by generators $x_{1},x_2$ and relations
\begin{align} \label{eq-jordan-plane-char2}
&x_{1}^2,&  &x_2^4,& &x_2^2x_{1}+x_{1}x_2^2+x_{1}x_2x_{1},& &x_{1}x_2x_{1}x_2+x_2x_{1}x_2x_{1}.&
\end{align}
Let $x_{21}:=x_{1}x_2+x_2x_{1}$. Then $\dim \toba(\cV(1,2))=16$ since $\toba(\cV)$ has a basis 
\begin{align*}
\{x_{1}^{m_{1}}x_{21}^{m_2}x_2^n\,:\,m_{1},m_2\in\I_{0,1},\,n\in \I_{0,3}\}. \qed
\end{align*}
\end{theorem}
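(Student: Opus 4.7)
The plan is to combine the realization framework of \S\ref{subsubsection:yd-ab} with the skew-derivation test for vanishing in the Nichols algebra. First I would realize $V=\cV(1,2)$ as a Yetter-Drinfeld module over $\ku\Gamma$ with $\Gamma=\langle g\rangle$ cyclic of order $2$: set $\delta(x_i)=g\otimes x_i$ and let $g$ act by the Jordan block $\left(\begin{smallmatrix}1&1\\0&1\end{smallmatrix}\right)$, which makes sense precisely because $\car\ku=2$, by the Example in \S\ref{subsec:realizations-block}. This gives the required intrinsic braiding and, crucially, the two skew-derivations $\partial_1,\partial_2$ on $T(V)$ from \eqref{eq:skewderivations}, whose common kernel in each positive degree of $\toba(V)$ is zero.

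Next I would verify that each of the four elements in \eqref{eq-jordan-plane-char2} vanishes in $\toba(V)$. Using $g\cdot x_1=x_1$, $g\cdot x_2=x_1+x_2$, one computes directly $\partial_1(x_1^2)=x_1+x_1=0$ and $\partial_2(x_1^2)=0$. For the remaining relations I would proceed by increasing degree, reducing computations modulo the relations already established. It is convenient to record the auxiliary identity $\partial_1(x_{21})=x_1$, $\partial_2(x_{21})=0$ for $x_{21}=x_1x_2+x_2x_1$, since $x_{21}$ plays the role of a ``central'' generator in the Jordan plane. Each of the degree $3$ and degree $4$ relations is checked to lie in $\ker\partial_1\cap\ker\partial_2$ after substituting lower-degree relations.

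Then I would obtain the upper bound $\dim\toba(V)\le 16$. Let $A$ be the algebra presented by $x_1,x_2$ and the relations \eqref{eq-jordan-plane-char2}. I would derive rewriting rules: $x_1^2=0$, the cubic relation moves an $x_1$ past $x_2^2$ producing a term in $x_{21}$, and the quartic relation implies $x_{21}^2$ is central and in fact $x_{21}x_2=x_2x_{21}$ and $x_{21}x_1=x_1x_{21}$, so every word in $x_1,x_2$ reduces to the normal form $x_1^{m_1}x_{21}^{m_2}x_2^{n}$ with $m_1,m_2\in\I_{0,1}$, $n\in\I_{0,3}$. One then checks via Bergman's diamond lemma (or by direct ambiguity resolution on the finitely many critical pairs) that this rewriting system is confluent, giving $\dim A\le 16$.

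For the matching lower bound I would test the $16$ normal-form monomials in $\toba(V)$ for linear independence using iterated derivations: a hypothetical relation $\sum c_{m_1m_2n}\,x_1^{m_1}x_{21}^{m_2}x_2^{n}=0$ is attacked by applying $\partial_2^{n}$ (peeling off $x_2$'s, using that $\partial_2$ kills $x_1$ and $x_{21}$) and then $\partial_1\partial_2$-patterns to isolate the $x_{21}^{m_2}$ and $x_1^{m_1}$ factors, the non-diagonal term in $g\cdot x_2$ being the only subtlety. This is the step I expect to be the main obstacle, because the derivations here are not diagonal so each $\partial_i$ mixes basis elements and the recursion must be organized carefully; the merging of the Jordan and super-Jordan phenomena in characteristic $2$ makes some cancellations occur that do not occur in odd characteristic. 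Combining the two bounds yields $\dim\toba(\cV(1,2))=16$ with the stated PBW basis.
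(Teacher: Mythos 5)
The first thing to say is that the paper offers no proof of this statement to compare against: Theorem \ref{thm:jordan-plane-char2} is quoted from \cite{clw} and closed immediately, so your write-up is an independent reconstruction. Its architecture is the standard one and is viable: realizing $\cV(1,2)$ over the cyclic group of order $2$ is legitimate precisely because $\car\ku=2$, the skew-derivation test does kill the four elements of \eqref{eq-jordan-plane-char2} in $\toba(\cV(1,2))$, and your auxiliary computations $\partial_1(x_1^2)=0$, $\partial_1(x_{21})=x_1$, $\partial_2(x_{21})=0$ are correct.

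There is, however, a concrete error in your rewriting step. You claim the quartic relation yields $x_{21}x_2=x_2x_{21}$. This is false: modulo $x_1^2=0$ and the cubic relation one computes
$x_{21}x_2+x_2x_{21}=x_1x_2^2+x_2^2x_1=x_1x_2x_1=x_{21}x_1\neq 0$,
i.e. $x_{21}x_2=x_2x_{21}+x_{21}x_1$ --- which is exactly the identity the paper itself invokes in the proof of Lemma \ref{le:-1bpz}. (What the quartic relation actually gives is $x_{21}^2=0$; and $x_{21}$ does commute with $x_1$.) With the commutation rule as you stated it, the overlap ambiguities in the diamond-lemma check will not resolve, so the upper bound $\dim\le 16$ does not follow from your system as written. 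The damage is repairable: replacing your rule by $x_2x_{21}\mapsto x_{21}x_2+x_1x_{21}$ still terminates in the normal forms $x_1^{m_1}x_{21}^{m_2}x_2^n$, since the correction term $x_{21}x_1=x_1x_{21}$ is already in normal form. Separately, your lower bound is only a sketch: as you yourself note, $\partial_2$ is not diagonal (e.g. $\partial_2(x_2^2)=x_1$, so ``peeling off $x_2$'s'' does not proceed one degree at a time), and that linear-independence argument must actually be organized and carried out --- for instance along the lines of \cite{clw} --- before the proof is complete.
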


\subsubsection{The splitting technique}\label{subsubsection:splitting}
Let $V = V_{1} \oplus V_2$ be a direct sum of objects in $\ydG$. 
Then $\NA(V) \simeq K \# \NA (V_{1})$ where $K=\NA (V)^{\mathrm{co}\,\NA (V_{1})}$. 
By \cite[Proposition 8.6]{heck-sch},  $K$ is the Nichols algebra of
\begin{align} \label{eq:1bpK^1}
K^1= \ad_c\NA (V_{1}) (V_2).
\end{align}
Here $K^1\in {}^{\NA (V_{1})\# \ku \Gamma}_{\NA (V_{1})\# \ku \Gamma}\mathcal{YD}$ with the adjoint action
and the coaction given by
\begin{align} \label{eq:coaction-K^1}
\delta &=(\pi _{\NA (V_{1})\#  \ku \Gamma}\otimes \id)\Delta _{\NA (V)\#  \ku \Gamma}.
\end{align}

\section{One block and one point}\label{section:block-point}
Let $(q_{ij})_{i,j \in \I_2}$, $q_{ij} \in \kut$, $a \in \ku$.
In this Section we assume that 
\begin{align}\label{eq:hyp-block-point}
q_{11} &= 1, & q_{12}q_{21} &= 1.
\end{align}
Sometimes we use $\wp = q_{12} = q_{21}^{-1}$.
Let $\lstr_{\wp}(q_{22},a)$ be the braided vector space  with  basis $(x_i)_{i\in\I_3}$ and  braiding given by
\begin{align}\label{eq:braiding-block-point}
(c(x_i \otimes x_j))_{i,j\in \I_3} &=
\begin{pmatrix}
 x_{1} \otimes x_{1}&  ( x_2 + x_{1}) \otimes x_{1}& q_{12} x_3  \otimes x_{1}
\\
 x_{1} \otimes x_2 & ( x_2 + x_{1}) \otimes x_2& q_{12} x_3  \otimes x_2
\\
q_{21} x_{1} \otimes x_3 &  q_{21}(x_2 + a x_{1}) \otimes x_3&  q_{22}x_3  \otimes x_3
\end{pmatrix}.
\end{align}
Let $V_{1} = \langle x_{1}, x_2\rangle \simeq \cV(1,2)$ (the block) and $V_2 = \langle x_3 \rangle$ (the point);
then $\lstr_{\wp}(q_{22},a)= V_{1}\oplus V_2$. For simplicity, $V =\lstr_{\wp}(q_{22},a)$.
Let $\Gamma = \Z^2$ with canonical basis $g_{1}, g_2$. Observe that $(V,c)$  can be realized in $\ydG$ via:
\begin{align}\label{eq:YD-structure-1block-1point}
\begin{aligned}
g_{1}\cdot x_{1} &= x_{1},&  g_{1}\cdot x_2 &= x_{1} + x_2, &  g_{1}\cdot x_3 &= q_{12} x_3,\\
g_2\cdot x_{1} &= q_{21}x_{1},&   g_2\cdot x_2 &= q_{21}(x_2 + ax_{1}),  & g_2\cdot x_3 &=  q_{22} x_3,\\
\deg x_1 &= g_{1},& \deg x_2 &= g_{1},&  \deg x_3 &=g_2.
\end{aligned}
\end{align}

If $a = 0$, then
$\NA (\lstr_{\wp}(q_{22},0)) \simeq \NA(V_{1}) \underline{\otimes} \NA(V_2)$,
where $\underline{\otimes}$ is the braided tensor product. 
Since $\dim \NA (V_{1})=2^4$, $\dim \NA (\lstr_{\wp}(q_{22},0)) < \infty \iff \dim \NA(\ku x_3) < \infty \iff q_{22} \in \G_{\infty}$. 
 Thus we can assume that $a \in \kut$.

\medbreak
Our main goal in this Section is to prove the following result.

\begin{theorem}\label{thm:point-block} Assume \eqref{eq:hyp-block-point} and that $a \neq 0$.
Then $ \dim \NA (\lstr_{\wp}(q_{22},a)) < \infty$ 
if and only if $q_{22} = 1$. 
Precisely,
$\dim \NA (\lstr_{\wp}(1,a))= \begin{cases} 
2^7 &\text{ if } a = 1, \\ 2^8 &\text{ if } a \in \ku \setminus \{0, 1\}. 
\end{cases}$ 
\end{theorem}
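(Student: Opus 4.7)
The plan is to exploit the splitting technique of \S\ref{subsubsection:splitting} applied to the decomposition $V = V_1 \oplus V_2$ with $V_1 = \cV(1,2)$ the block and $V_2 = \ku x_3$ the point. By Theorem \ref{thm:jordan-plane-char2}, $\dim \NA(V_1) = 16$, and $\NA(V) \simeq K\# \NA(V_1)$, so $\dim \NA(V)=16\cdot \dim K$. The theorem therefore reduces to showing that $\dim K<\infty$ iff $q_{22}=1$, and that $\dim K = 2^3$ when $a=1$ and $\dim K = 2^4$ when $a\in\kut\setminus\{1\}$.

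First I would identify the braided vector space $K^1 = \ad_c \NA(V_1)(x_3)$ explicitly. Starting from $x_3$ and iterating the braided adjoint action by $x_1$ and $x_2$, one produces candidates
\begin{align*}
x_3,\qquad x_{13}=(\ad_c x_1)(x_3),\qquad x_{23}=(\ad_c x_2)(x_3),\qquad x_{223}=(\ad_c x_2)(x_{23}).
\end{align*}
Using the relations \eqref{eq-jordan-plane-char2} of $\NA(V_1)$ together with braided Jacobi-type identities, I would prove that this chain stabilizes: the identity $x_1^2=0$ forces $(\ad_c x_1)^2(x_3)=0$, and the fact that $x_2^4=0$ together with the explicit braiding \eqref{eq:braiding-block-point} forces $(\ad_c x_2)^3(x_3)$ to lie in the span of the previous elements modulo $K^1\cdot \NA(V_1)_{>0}$. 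When $a=1$, an additional linear relation among the four elements above drops the number of generators of $K^1$ to three. To verify linear independence of the remaining generators in $\NA(V)$ (and hence rule out further collapses), I would apply the skew-derivations $\partial_i$ from \eqref{eq:skewderivations}, exploiting the criterion recalled after \eqref{eq:skewderivations} that $w = 0$ in $\NA(V)$ iff all $\partial_i(w)$ vanish.

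Next I would compute the Yetter-Drinfeld structure of $K^1$ from the realization \eqref{eq:YD-structure-1block-1point} and the coaction formula \eqref{eq:coaction-K^1}. Since the $\Gamma$-action on the block contributes only a unipotent perturbation that lives inside $K^1\cdot \NA(V_1)_{>0}$ and thus vanishes after projection onto $K^1$, the induced action on the chosen generators turns out to be diagonal, so $K^1$ is a braided vector space of diagonal type. I would write down the resulting matrix explicitly; its diagonal entries are all equal to $q_{22}$, and its off-diagonal products $q_{ij}q_{ji}$ depend only on $q_{22}$ and on $\wp \wp^{-1}=1$, and hence all equal $q_{22}^2$ or $q_{22}$.

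Finally I would invoke the classification of finite-dimensional Nichols algebras of diagonal type in characteristic $2$ in rank $\leq 4$ \cite{heck-wang,wang-rank3,wang-rank4}. If $q_{22}\neq 1$, I would exhibit a rank-$2$ sub-braiding of $K^1$ whose generalized Dynkin diagram fails to appear in the classification tables, concluding $\dim \NA(K^1)=\infty$. If $q_{22}=1$, then every diagonal entry of the braiding on $K^1$ equals $1=-1$ and all off-diagonal products equal $1$, so $\NA(K^1)$ is the exterior algebra on $K^1$, giving $\dim K = 2^{\dim K^1}$, that is $2^3$ or $2^4$ as claimed.

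The main obstacle will be Step~1: producing, inside $\NA(V)$, the precise finite list of generators of $K^1$ and the exact relation that appears only when $a=1$. Because the block $V_1$ is not of diagonal type, the iterated braided commutators must be reduced using the non-homogeneous action of $x_1, x_2$ and the quartic relations of the restricted Jordan plane, and the dichotomy $a=1$ vs. $a\neq 1$ has to be pinpointed via a single explicit identity together with a skew-derivation certificate of nonvanishing in the complementary case.
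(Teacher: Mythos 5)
Your overall strategy coincides with the paper's: split off the block, identify $K^1=\ad_c\NA(V_1)(x_3)$, show it is of diagonal type, and decide finiteness from the rank~$\le 4$ classification. However, two of your concrete steps would fail as stated. First, your candidate list for $K^1$ is wrong. The element $x_{13}=(\ad_c x_1)(x_3)$ is identically zero (one checks $x_1x_3=q_{12}x_3x_1$ directly with skew derivations), so it cannot serve as a generator; and $(\ad_c x_2)^3(x_3)$ does \emph{not} lie in the span of the lower terms when $a\neq 1$ --- it is a nonzero element of a new degree, detected by $\partial_3\bigl((\ad_c x_2)^3 x_3\bigr)=a(a+1)\,x_1x_{21}$. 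The correct picture is that $K^1$ is spanned by $z_n=(\ad_c x_2)^n x_3$ with $z_4=0$ always and $z_3=0$ exactly when $a=1$; the dichotomy is the vanishing of $z_3$, not ``an additional linear relation among the four elements'' you list (one of which is always zero). Carried out literally, your Step~1 would produce at most three independent generators in every case and hence the wrong dimension $2^7$ for $a\neq 1$.

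Second, in the case $q_{22}\neq 1$ you propose to exhibit a rank-$2$ sub-braiding whose diagram is not in the tables. This cannot work for all $q_{22}$: every rank-$2$ subdiagram of $K^1$ here is $\circ^{q_{22}}\overset{q_{22}^2}{\text{---}}\circ^{q_{22}}$, which for $q_{22}=\omega\in\G'_3$ is of Cartan type $A_2$ and \emph{does} give a finite-dimensional Nichols algebra (this is exactly the diagram that survives in Theorem \ref{teo-paleblock}). To kill $q_{22}\in\G'_3$ you must use the full rank-$3$ (triangle) or rank-$4$ diagram, as the paper does by inspecting the lists of \cite{wang-rank3,wang-rank4}. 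The remaining ingredients of your plan --- the vanishing of $(\ad_c x_1)$ and $(\ad_c x_{21})$ on $K^1$, the diagonal coaction with all vertices $q_{22}$ and all edges $q_{22}^2$, and the quantum exterior algebra of dimension $2^{\dim K^1}$ when $q_{22}=1$ --- match the paper's Lemmas \ref{K-basis}--\ref{le:zcoact} and are sound.
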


We shall apply the splitting technique cf. \S\ref{subsubsection:splitting}. To describe $K^1$, we set
\begin{align}\label{eq:zn}
z_n &:= (\ad_c x_2)^n x_3,& n&\in\N_0.
\end{align}

 We establish first a series of useful  formulae.

\begin{lemma} \label{le:-1bpz} The following formulae hold in $\NA(V)$ for all $n\in\N_0$:
\begin{align}\label{eq:block+point1}
g_{1}\cdot z_n &= q_{12}z_n,& x_{1}z_n &= q_{12}z_nx_{1},& x_{21}z_n &= q_{12}^2z_nx_{21},& 
\\ 
\label{eq:block+point2}g_2\cdot z_n &= q_{21}^nq_{22}z_n,& x_2z_n &= q_{12}z_nx_2 + z_{n+1}. 
\end{align}
\end{lemma}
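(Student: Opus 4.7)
The plan is to establish the five identities by simultaneous induction on $n$. For the base case $n=0$ we have $z_0=x_3$, so the identities $g_1\cdot x_3 = q_{12}x_3$ and $g_2\cdot x_3 = q_{22}x_3$ are read off from \eqref{eq:YD-structure-1block-1point}, and $x_2 x_3 = q_{12} x_3 x_2 + z_1$ amounts to the definition $z_1 = (\ad_c x_2)(x_3)$. The identity $x_1 x_3 = q_{12} x_3 x_1$ is the quantum Serre relation $(\ad_c x_1)(x_3) = 0$: the braided subspace $\langle x_1, x_3\rangle$ is of diagonal type with $q_{11}=1$ and $q_{12}q_{21}=1$, so this relation holds in $\NA(V)$. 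For $x_{21} x_3 = q_{12}^2 x_3 x_{21}$ at $n=0$, observe that $\delta(x_{21}) = g_1^2 \otimes x_{21}$ and $g_2\cdot x_{21} = q_{21}^2 x_{21}$ (the latter using $x_1^2 = 0$), so $(\ad_c x_{21})(x_3)$ is a Serre-type element for a virtual diagonal pair with parameters satisfying $q_{12}^2 q_{21}^2 = 1$; its vanishing in $\NA(V)$ can be verified directly using the skew derivations $\partial_1,\partial_2,\partial_3$ on $\NA(V)$.

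For the inductive step, assume the five identities at level $n$; the recursion $z_{n+1} = x_2 z_n - q_{12} z_n x_2$ is exactly the last identity of \eqref{eq:block+point2} at $n$. The identities at $n+1$ are then derived in the following order. First, $g_1\cdot z_{n+1} = q_{12} z_{n+1}$: expand using $g_1\cdot x_2 = x_1 + x_2$ and move $x_1$ past $z_n$ via $x_1 z_n = q_{12} z_n x_1$; the extra $x_1$-contributions cancel pairwise. Then $x_2 z_{n+1} = q_{12} z_{n+1} x_2 + z_{n+2}$ is immediate from this eigenvalue and the definition of $z_{n+2}$. Next, $g_2\cdot z_{n+1} = q_{21}^{n+1}q_{22}z_{n+1}$ is analogous, with the $ax_1$-terms cancelling thanks to $q_{12}q_{21}=1$. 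For $x_1 z_{n+1} = q_{12} z_{n+1} x_1$, expand $x_1 z_{n+1} = x_1 x_2 z_n - q_{12} x_1 z_n x_2$, substitute $x_1 x_2 = x_2 x_1 + x_{21}$ (valid in characteristic $2$), and apply the inductive hypothesis; characteristic-$2$ cancellations leave $q_{12} z_{n+1} x_1$. Finally, for $x_{21} z_{n+1} = q_{12}^2 z_{n+1} x_{21}$, expand via the recursion and use the Jordan-plane identity $x_{21} x_2 = x_2 x_{21} + x_1 x_2 x_1$ (a consequence of \eqref{eq-jordan-plane-char2}); the inductive hypothesis reduces the claim to a residual term of shape $q_{12}(x_1 z_{n+1}) x_1$, which vanishes by the freshly proved $x_1 z_{n+1} = q_{12} z_{n+1} x_1$ together with $x_1^2 = 0$.

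The main obstacle is twofold. First, the base case for $x_{21} x_3 = q_{12}^2 x_3 x_{21}$ is not a direct combinatorial unravelling, since $x_{21}$ is not primitive in $\NA(V)$; it requires a skew-derivation verification, or else an abstract Serre-type argument applied to the virtual diagonal pair $(x_{21}, x_3)$. Second, the ordering within the inductive step is delicate: the identity for $x_{21}$ at level $n+1$ relies on the identity for $x_1$ at level $n+1$, so the latter must be propagated first to avoid circularity. The characteristic-$2$ hypothesis is essential throughout, as many of the needed cancellations rely on $2=0$ in $\ku$.
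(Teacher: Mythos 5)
Your proposal is correct and follows essentially the same route as the paper: a simultaneous induction on $n$, with the base cases settled by the skew derivations $\partial_1,\partial_2,\partial_3$ and the inductive step driven by the recursion $z_{n+1}=x_2z_n+q_{12}z_nx_2$ together with the restricted Jordan plane relations and characteristic-$2$ cancellations. The only (harmless) variations are cosmetic: the paper verifies $x_{21}z_{n+1}=q_{12}^2z_{n+1}x_{21}$ using only the level-$n$ hypotheses via $x_{21}x_2=x_2x_{21}+x_{21}x_1$, so the ordering issue you flag does not actually arise in its argument, and your Serre-relation justification of $x_1x_3=q_{12}x_3x_1$ is an equivalent repackaging of the derivation check.
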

\noindent\emph{Proof.}
Note that \eqref{eq:block+point1} holds for $n=0$. Indeed, $g_{1}\cdot z_0 = g_{1}\cdot x_3=q_{12}z_0$ and using derivations is easy to check that $x_{1}z_0= q_{12}z_0x_{1}$ and  $x_{21}z_0 = q_{12}^2z_0x_{21}$. Now suppose that \eqref{eq:block+point1} holds for $n$. Then, $ z_{n+1}=  (\ad_c x_2)^{n+1} x_3=  (\ad_c x_2)z_n=x_2 z_n+(g_{1} \cdot z_n) x_2=x_2z_n + q_{12}z_nx_2$.
So we compute
\begin{align*}
g_{1}\cdot z_{n+1}&=g_{1}\cdot(x_2z_n+q_{12}z_nx_2) =q_{12}(x_{1}+x_2)z_n+q_{12}^2z_n(x_{1}+x_2)\\&=q_{12}[(x_2z_n+q_{12}z_nx_2)+(x_{1}z_n+q_{12}z_nx_{1})] =q_{12}z_{n+1}.\end{align*}

Similarly,
\begin{align*}
x_{1} z_{n+1}&=x_{1}(x_2z_n+q_{12}z_nx_2)=(x_{21}+x_2x_{1})z_n+q_{12}^2z_nx_{1}x_2\\&=q_{12}^2z_nx_{21}+q_{12}x_2z_nx_{1}+q_{12}^2z_n(x_{21}+x_2x_{1})\\&=q_{12}(x_2z_n+q_{12}z_nx_2)x_{1}=q_{12}z_{n+1}x_{1}.
\end{align*}

Also, since $x_{21}x_2=x_2x_{21}+x_{21}x_{1}$ we have that
\begin{align*}
x_{21} z_{n+1}&=x_{21}(x_2z_n+q_{12}z_nx_2) =x_{21}x_2z_n+q_{12}^3z_nx_{21}x_2\\&=x_2x_{21}z_n+x_{21}x_{1}z_n+q_{12}^3z_nx_{21}x_2\\&=q_{12}^2x_2z_nx_{21}+q_{12}x_{21}z_n x_{1}+q_{12}^3z_n(x_2x_{21}+x_{21}x_{1})\\&=q_{12}^2x_2z_nx_{21}+q_{12}^3z_nx_{21}x_{1}+q_{12}^3z_nx_2x_{21}+q_{12}^3z_nx_{21}x_{1}\\&=q_{12}^2(x_2z_n+q_{12}z_nx_2)x_{21}\\&=q_{12}^2z_{n+1}x_{21}.
\end{align*}

Finally, the first equation in \eqref{eq:block+point2} follows by induction.
For $n=0$, $g_2 \cdot z_0=q_{22}z_0$. Suppose that $g_2\cdot z_n=q_{21}^nq_{22}z_n$. Then,
\begin{align*}
g_2\cdot z_{n+1}&=g_2\cdot(x_2z_n+q_{12}z_nx_2) \\&= q_{21}(x_2+ax_{1})(q_{21}^nq_{22}z_n)+q_{12}(q_{21}^nq_{22}z_n)q_{21}(x_2+ax_{1}) \\&= q_{21}^{n+1}q_{22}(x_2z_n+q_{12}z_nx_2)+aq_{21}^{n+1}q_{22}(x_{1}z_n+q_{12}z_nx_{1})\\&=q_{21}^{n+1}q_{22}z_{n+1}. 
\hspace{200pt}\qed
\end{align*}

We define 
\begin{align*} \mu_0 &=1, &\mu_{1} &=a,&  \mu_{2} &= a, & \mu_{3} &= a(a+1), \\
y_0 &= 1, &y_{1} &= x_{1}, &  y_{2} &= x_{21}, & y_{3} &= x_{1}x_{21}.
\end{align*}

\begin{lemma}\label{lemma:derivations-zn}
For all $k \in\N_0$, $\partial_{1}(z_k) =\partial_2(z_k) =  0$, and 
\begin{align*}
&\partial_3(z_{k})=\mu_ky_k, \quad k\in\I_{0,3},&  &\partial_3(z_{k})=0,\quad k\geq 4.&
\end{align*}
\end{lemma}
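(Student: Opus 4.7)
My plan is to argue by induction on $k$, using the recurrence $z_{n+1} = x_2 z_n + q_{12} z_n x_2$ established inside the proof of Lemma \ref{le:-1bpz}, together with the twisted Leibniz rule \eqref{eq:skewderivations}. The base case $k = 0$ is immediate since $z_0 = x_3$ and $\partial_i(x_3) = \delta_{i,3}$, which matches $\mu_0 y_0 = 1$. For the inductive step on $\partial_{1}$ and $\partial_2$, I would apply \eqref{eq:skewderivations} to each of the two summands of $z_{n+1}$. For $\partial_{1}$ we have $\partial_{1}(x_2) = 0$ and $\partial_{1}(z_n) = 0$ by induction, so both terms vanish at once. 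For $\partial_2$, noting that $\deg x_2 = g_{1}$ and $\partial_2(x_2) = 1$, the surviving contributions are $(g_{1}\cdot z_n) + q_{12} z_n$, which equals $q_{12} z_n + q_{12} z_n = 0$ by \eqref{eq:block+point1} and characteristic $2$.

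The substance of the lemma lies in $\partial_3$. Using $\partial_3(x_2) = 0$, $\deg x_3 = g_2$, the identities $g_2 \cdot z_n = q_{21}^n q_{22} z_n$ and $g_2 \cdot x_2 = q_{21}(x_2 + a x_{1})$ from Lemma \ref{le:-1bpz} and \eqref{eq:YD-structure-1block-1point}, and the normalization $q_{12} q_{21} = 1$, the Leibniz rule collapses to the clean recurrence
\[
\partial_3(z_{n+1}) \;=\; x_2\, \partial_3(z_n) \;+\; \partial_3(z_n)\,(x_2 + a x_{1}).
\]
From here I would compute $\partial_3(z_k)$ iteratively for $k = 1, 2, 3$, using the restricted Jordan plane relations \eqref{eq-jordan-plane-char2} (notably $x_{1}^2 = 0$ and, as a consequence, $x_2 x_{21} + x_{21} x_2 = x_{1} x_2 x_{1}$) to simplify. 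A short calculation yields $\partial_3(z_1) = a x_{1}$, $\partial_3(z_2) = a x_{21}$, and $\partial_3(z_3) = a(a+1) x_{1} x_2 x_{1}$, each of which agrees with $\mu_k y_k$ (once one notes that $x_{1} x_{21} = x_{1} x_2 x_{1}$ modulo $x_{1}^2$).

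The main obstacle, and the reason the sequence terminates at $k = 3$, is the step $k = 4$. The recurrence produces
\[
\partial_3(z_4) \;=\; a(a+1)\bigl(x_2 x_{1} x_2 x_{1} + x_{1} x_2 x_{1} x_2\bigr) \;+\; a^2(a+1)\, x_{1} x_2 x_{1}^2,
\]
which is not formally zero in $T(V)$. The last relation in \eqref{eq-jordan-plane-char2}, namely $x_{1} x_2 x_{1} x_2 + x_2 x_{1} x_2 x_{1} = 0$, together with $x_{1}^2 = 0$, is precisely what is needed to force this expression to vanish in $\NA(V)$. Once $\partial_3(z_4) = 0$ is in hand, the displayed recurrence propagates $\partial_3(z_k) = 0$ for all $k \geq 4$, completing the proof.
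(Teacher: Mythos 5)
Your proposal is correct and follows essentially the same route as the paper: induction via the recurrence $z_{n+1}=x_2z_n+q_{12}z_nx_2$, the twisted Leibniz rule \eqref{eq:skewderivations}, and the restricted Jordan plane relations \eqref{eq-jordan-plane-char2}, with $x_1^2=0$ and $x_1x_2x_1x_2+x_2x_1x_2x_1=0$ killing $\partial_3(z_4)$. Your only (cosmetic) improvement is isolating the clean recurrence $\partial_3(z_{n+1})=x_2\,\partial_3(z_n)+\partial_3(z_n)(x_2+ax_1)$ before iterating, where the paper expands each step directly.
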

\noindent\emph{Proof.} 
Clearly, $\partial_{1}(z_0) =\partial_2(z_0) =  0$, $\partial_3(z_0) = 1$. 
Recursively, $\partial_{1}(z_k) =  0$ for all $k$. 
If $\partial_2(z_k) =  0$, then $\partial_2(z_{k+1}) = \partial_2(x_2z_k+q_{12}z_kx_2)=g_{1} \cdot z_k+q_{12}z_k \overset{\eqref{eq:block+point1}}{=}0$. Next,
\begin{align*}
\partial_3(z_{1})&=\partial_3(x_2x_3+q_{12}x_3x_2)=x_2+q_{12}(q_{21}(x_2+ax_{1}))=ax_{1}=\mu_{1}y_{1},\\
\partial_3(z_2)&=\partial_3(x_2z_{1}+q_{12}z_{1}x_2)=ax_2x_{1}+q_{12}ax_{1}q_{21}(x_2+ax_{1})=ax_{21}=\mu_2y_2, \\
\partial_3(z_3)&=\partial_3(x_2z_2+q_{12}z_2x_2)=ax_2x_{21}+q_{12}ax_{21}q_{21}(x_2+ax_{1})
\\&=
ax_2(x_2x_{1}+x_{1}x_2)+a(x_2x_{1}+x_{1}x_2)(x_2+ax_{1})\\&=ax_2^2x_{1}+ax_{1}x_2^2+a^2x_{1}x_2x_{1}\stackrel{\eqref{eq-jordan-plane-char2}}{=}(a+a^2)x_{1}x_2x_{1}\\&=(a+a^2)x_{1}x_{21}=\mu_3y_3,\\
\partial_3(z_4)&=\partial_3(x_2z_3+q_{12}z_3x_2)\\&=(a+a^2)x_2x_{1}x_2x_{1}+q_{12}(a+a^2)x_{1}x_2x_{1}(g_2\cdot x_2)\\&=(a+a^2)x_2x_{1}x_2x_{1}+q_{12}(a+a^2)x_{1}x_2x_{1}(q_{21}(x_2+ax_{1}))\\&=(a+a^2)(x_2x_{1}x_2x_{1}+x_{1}x_2x_{1}x_2)\stackrel{\eqref{eq-jordan-plane-char2}}{=}0. \hspace{110pt}\qed
\end{align*}

\begin{lemma} \label{K-basis} Let $B_{1}:=\{z_i: i \in \I_{0,2}\}$ and $B_2:=\{z_i: i \in \I_{0,3}\}$. If $a=1$ (resp. $a\neq 1$), then $B_{1}$ (resp. $B_2$) is a basis of $K^1$.
\end{lemma}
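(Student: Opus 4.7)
My plan is to combine Lemmas~\ref{le:-1bpz} and \ref{lemma:derivations-zn} to identify $K^1$ as the linear span of the $z_n$, then use the skew-derivation criterion for vanishing in a Nichols algebra to cut this span down to the correct finite set.

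First I would show that $W := \mathrm{span}_{\ku}\{z_n : n \geq 0\}$ equals $K^1$. The inclusion $W \subseteq K^1$ is immediate since $z_n = (\ad_c x_2)^n(x_3) \in \ad_c \NA(V_1)(V_2) = K^1$. For the reverse inclusion, it suffices that $W$ be stable under both $\ad_c(x_1)$ and $\ad_c(x_2)$, because $\NA(V_1)$ is generated as an algebra by $x_1,x_2$ and $\ad_c \colon \NA(V_1) \to \End(\NA(V))$ respects multiplication. Stability under $\ad_c(x_2)$ is built into the definition, giving $\ad_c(x_2)(z_n) = z_{n+1}$, while Lemma~\ref{le:-1bpz} furnishes $x_1 z_n = q_{12} z_n x_1 = (g_1\cdot z_n)\, x_1$, so $\ad_c(x_1)(z_n) = 0$. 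Thus $W$ is $\ad_c(\NA(V_1))$-stable and contains $V_2 = \ku x_3$, forcing $K^1 \subseteq W$.

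For linear independence, I note that from \eqref{eq:YD-structure-1block-1point} the element $z_n$ is $\Gamma$-homogeneous of degree $n g_1 + g_2$, so distinct nonzero $z_n$'s live in distinct $\Gamma$-components and are therefore automatically linearly independent. It then remains to pin down which $z_n$ are zero. Here I invoke the criterion recalled in \S\ref{subsection:nichols algebras} that a homogeneous element $w \in \NA^k(V)$ with $k \geq 1$ vanishes iff $\partial_i(w) = 0$ for every $i$. By Lemma~\ref{lemma:derivations-zn}, $\partial_1(z_n) = \partial_2(z_n) = 0$ for all $n$, so only $\partial_3$ matters. For $k \in \I_{1,2}$, $\partial_3(z_k) = a\, y_k \neq 0$ because $a \neq 0$, so $z_k \neq 0$; and $z_0 = x_3 \neq 0$ trivially. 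For $z_3$, $\partial_3(z_3) = a(a+1)\, y_3$, which vanishes precisely when $a = 1$ (using $\car\ku = 2$ together with $a\neq 0$), so $z_3 = 0 \iff a = 1$. Finally, $\partial_3(z_4) = 0$ from the same lemma gives $z_4 = 0$, whence $z_n = (\ad_c x_2)^{n-4}(z_4) = 0$ for all $n \geq 4$; if $a = 1$, the vanishing already propagates from $z_3$. Combining, $B_1$ is a basis of $K^1$ when $a = 1$ and $B_2$ is a basis when $a \neq 1$.

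The only delicate point is the spanning step, which hinges on the observation $\ad_c(x_1)(z_n) = 0$ extracted from Lemma~\ref{le:-1bpz}; once that is in hand, everything reduces to reading off the derivation calculations already carried out and invoking the standard vanishing criterion in a Nichols algebra.
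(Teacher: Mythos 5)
Your proof is correct and follows essentially the same route as the paper: both arguments rest on the observation that $\ad_c(x_1)$ annihilates the $z_n$ (so that, since $x_1,x_2$ generate $\NA(V_1)$, the $z_n$ span $K^1$), then use Lemma~\ref{lemma:derivations-zn} together with the basis of the restricted Jordan plane from Theorem~\ref{thm:jordan-plane-char2} to decide exactly which $z_n$ vanish, and conclude linear independence from homogeneity in distinct degrees. Your write-up merely makes explicit a few steps the paper leaves implicit (the reduction to stability under $\ad_c(x_1)$ and $\ad_c(x_2)$, and the case analysis on $a$ via $\mu_3=a(a+1)$).
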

\pf Notice that $(\ad_c x_{1})z_n=0$ and $(\ad_c x_{21})z_n=0$. By Theorem  \ref{thm:jordan-plane-char2} and Lemma \ref{lemma:derivations-zn}, if $a=1$ (resp. $a\neq 1$), then $B_{1}$ (resp. $B_2$) generates $K^1$. Since the elements of $B_i$ ($i \in \I_{0,2}$) are homogeneous of distinct degrees and are non-zero, it follows that $B_i$ ($i \in \I_{0,2}$) is a linearly independent set.   \epf

Let $i\in \N_0$. We define recursively the scalars $\nu_{i,j}$, for $j>i$, by
 \begin{align*}\nu_{i,i} &= 1, & \nu_{i,j} &= (a+(j-1)) \, \nu_{i,j-1}. \end{align*}

\begin{lemma} \label{le:zcoact} The coaction \eqref{eq:coaction-K^1} on $z_i$, $i\in\I_{0,3}$, is given,
(for $n=0,1$)  by 
\begin{align*} 
\delta (z_{2n}) &=
\sum _{k=1}^n \nu_{k,n} 
x_{1}x_{21}^{n-k}g_{1}^{2k-1}g_2\otimes z_{2k-1}
  + \sum _{k=0}^n \nu_{k,n} x_{21}^{n-k}g_{1}^{2k}g_2\otimes z_{2k},
\\ 
\delta (z_{2n+1}) &=
\sum _{k=0}^n \nu_{k,n+1}  x_{1}x_{21}^{n-k}g_{1}^{2k}g_2\otimes z_{2k}
 + \sum _{k=0}^n \nu_{k+1,n+1} x_{21}^{n-k}g_{1}^{2k+1}g_2\otimes z_{2k+1}.
\end{align*} 
\end{lemma}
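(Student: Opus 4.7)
The plan is to exploit that the canonical projection $\pi\colon \NA(V)\#\ku\Gamma \twoheadrightarrow \NA(V_1)\#\ku\Gamma$ sending $x_3\mapsto 0$ is a Hopf algebra map (since $x_3$ generates a YD-submodule of $V$ and is primitive in the braided sense), whence $\delta = (\pi\otimes\id)\Delta$ is an algebra morphism. Combined with the recursion $z_{n+1} = x_2 z_n + q_{12} z_n x_2$ established in the proof of Lemma \ref{le:-1bpz}, this yields
\[
\delta(z_{n+1}) = \delta(x_2)\,\delta(z_n) + q_{12}\,\delta(z_n)\,\delta(x_2),
\]
with $\delta(x_2) = x_2\otimes 1 + g_1\otimes x_2$ and, immediately, $\delta(z_0) = g_2\otimes z_0$. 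The latter is precisely the $n=0$ instance of the even formula.

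Starting from $\delta(z_0)$ I would apply this recursion three times to compute $\delta(z_1)$, $\delta(z_2)$, $\delta(z_3)$ in turn. At each step the routine is: expand the product, use the smash-product commutation rules $g_1 x_1 = x_1 g_1$, $g_1 x_{21} = x_{21} g_1$, $g_2 x_1 = q_{21} x_1 g_2$, $g_2 x_{21} = q_{21}^2 x_{21} g_2$, and crucially $g_2 x_2 = q_{21}(x_2 + a x_1) g_2$ to move group elements past the $x_i$'s; then use $x_2 z_k = z_{k+1} + q_{12} z_k x_2$ from Lemma \ref{le:-1bpz} to put the right-hand tensorand back into the $z_j$-basis. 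Finally one invokes $x_1^2 = 0$, $x_{21} = x_1 x_2 + x_2 x_1$, $q_{12} q_{21} = 1$, together with the relations of the restricted Jordan plane from Theorem \ref{thm:jordan-plane-char2}.

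Two bookkeeping points drive the shape of the answer. First, the unwanted tails $\ast\otimes z_k x_2$ arising along the way must cancel in pairs; this happens because they appear with relative coefficient $q_{12} + q_{12} = 2q_{12} = 0$ in characteristic $2$. The mechanism is already visible in the verification of $\delta(z_1)$, where the two copies of $q_{12}\,g_1 g_2\otimes z_0 x_2$ coming from $\delta(x_2)\delta(z_0)$ and from $q_{12}\delta(z_0)\delta(x_2)$ annihilate. Second, the $\nu_{i,j}$ coefficients are generated from two sources: the factor $a$ comes from the action $g_2\cdot x_2 = q_{21}(x_2 + ax_1)$ each time $x_2$ is commuted past $g_2$, and the extra summand $(j-1)$ in $\nu_{i,j} = (a + (j-1))\nu_{i,j-1}$ arises through the Jordan-plane commutator $x_{21} x_2 = x_2 x_{21} + x_{21} x_1$ when $x_2$ is passed through successive powers of $x_{21}$ in the left tensorand.

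The main obstacle is then the tedious but essentially mechanical bookkeeping of coefficients across the four cases: one must verify that, after all the smash-product and Jordan-plane simplifications, the surviving terms reproduce exactly the claimed formulas, including the precise values of $\nu_{0,n}$, $\nu_{n,n}$ and the intermediate $\nu_{k,n}$.
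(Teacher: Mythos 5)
Your proposal is correct and follows essentially the same route as the paper, whose entire proof is a pointer to the analogous computation in \cite[Lemma 4.2.5]{aah-triang}: starting from $\delta(z_0)=g_2\otimes z_0$, one exploits that $\delta$ is an algebra map together with the recursion $z_{n+1}=x_2z_n+q_{12}z_nx_2$, the smash-product commutation rules, and the characteristic-$2$ cancellation of the $\ast\otimes z_kx_2$ tails exactly as you describe. A direct check of $\delta(z_1)$ and $\delta(z_2)$ along your lines reproduces the stated coefficients $\nu_{0,1}=a$, $\nu_{1,1}=1$, $\nu_{0,2}=a(a+1)$, confirming the mechanism you identify for the factors $a$ and $(j-1)$.
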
 

\pf Similar to the proof of \cite[Lemma 4.2.5]{aah-triang}.
\epf

Lemma \ref{le:zcoact} implies that $K^1$ is of diagonal type with braiding given by
\begin{align}\label{braid-bloc+1ponto}
c(z_i \otimes z_j)=q_{21}^{j-i}q_{22}z_j \otimes z_i, \qquad \quad \forall\,\, i,j.
\end{align}

\smallbreak
 
 Now we are ready for to prove the main result of this Section. 

 \smallbreak
\noindent{\it Proof of Theorem \ref{thm:point-block}.}
If $q_{22}=1$, then the Dynkin diagram of $K^1$ is totally disconnected with vertices labelled with $1$. 
Thus, if $a=1$  then $\dim\toba(K^1)=2^3$ and $\dim \NA (\lstr_{\wp}(1,1))=2^7$; if $a\neq 1$, 
then  $\dim\toba(K^1) = 2^4$  and  $\dim \NA (\lstr_{\wp}(1,a))=2^8$. 
 If $q_{22} \neq 1$, then the Dynkin diagram of $K^1$ is 
\begin{align*} a&=1:& &\xymatrix{  &\circ^{q_{22}}\ar@{-}[rd]^{q_{22}^2} &\\ 
\circ^{q_{22}}\ar@{-}[ru]^{q_{22}^2} \ar@{-}[rr]^{q_{22}^2} &  &\circ^{q_{22}}; } & a&\neq 1: 
&\xymatrix{&\ar@{-}[ld]_{q_{22}^2} \ar@{-}[rd]^{q_{22}^2}\circ^{q_{22}} &  \\ \circ^{q_{22}}\ar@{-}@/_1.5pc/[rr]_{q_{22}^2}\ar@{-}[r]^{q_{22}^2} & \circ^{q_{22}} \ar@{-}[u]^{q_{22}^2}\ar@{-}[r]^{q_{22}^2}& \circ^{q_{22}}.} 
\end{align*}
By inspection of the lists in \cite{wang-rank3,wang-rank4} we conclude that $\dim \NA(K^{1}) = \infty$.
\qed

\subsection{The presentation by generators and relations}\label{subsubsection:presentation-block-point}
Let $c$ be the braiding of $K^1$ as in \eqref{braid-bloc+1ponto}.
Then  $q_{22}=1$ if and only if $c^2=\id$. 
Hence, for $a=1$ (resp. $a\neq 1$), $\NA (K^1)$ is the algebra generated by $ z_0,z_{1},z_2$ (resp. $z_0,z_{1},z_2,z_3$) with relations
\begin{align*}
z_i^2=0,\qquad z_i z_j= q_{21}^{j-i}z_jz_i, \qquad i\neq j.
\end{align*}
Thus, we have the following results.

\begin{prop}\label{nich-ex1} The algebra
$\toba(\lstr_{\wp}(1,1))$ is presented by generators $x_{1}$, $x_2$, $x_3$ 
with defining relations \eqref{eq-jordan-plane-char2} and 
\begin{align}
&x_{1}z_j= q_{12} \, z_jx_{1}, \quad z_{j+1}=x_{2}z_j+q_{12} z_jx_{2},\qquad j \in \N_0, \\
z_i&z_j=q_{21}^{j-i}z_jz_i,\quad  z_j^2=0,\quad z_k=0,\qquad i,j \in \I_{0,2},\quad k\geq 3.
\end{align} 
The dimension of $\toba(\lstr_{\wp}(1,1))$ is $2^{7}$, since it has a PBW-basis
\begin{align*}
\{ x_{1}^{m_{1}}x_{21}^{m_2} x_{2}^{m_3} z_{2}^{n_{2}}z_{1}^{n_{1}} z_0^{n_0}\,:\, m_{1}, m_2,n_i \in\I_{0,1},\, m_3 \in \I_{0,3} \}.
\hspace{30pt}\qed
\end{align*} 
\end{prop}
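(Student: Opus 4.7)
I would use the splitting $\NA(V) \simeq K \# \NA(V_1)$ from \S\ref{subsubsection:splitting} with $V_1 \simeq \cV(1,2)$ and $K = \NA(K^1)$, together with the structural results already proven in this section. Most of the work has been done; what remains is to assemble the dimension count, verify that the stated relations hold, and perform a PBW normal-form reduction.

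First I would assemble the two factors. Theorem \ref{thm:jordan-plane-char2} gives $\dim \NA(V_1) = 2^4$ with PBW basis $\{x_1^{m_1} x_{21}^{m_2} x_2^{m_3}\}$. Lemma \ref{K-basis} in the case $a=1$ shows that $K^1$ has basis $\{z_0, z_1, z_2\}$: indeed $z_3 = 0$ in $\NA(V)$ because $\mu_3 = a(a+1) = 0$ when $a=1$ in characteristic $2$, and Lemma \ref{lemma:derivations-zn} then annihilates all derivatives of $z_3$. By \eqref{braid-bloc+1ponto}, when $q_{22} = 1$ the braiding of $K^1$ reduces to the symmetry $c(z_i \otimes z_j) = q_{21}^{j-i} z_j \otimes z_i$, with all diagonal entries equal to $1$ and $q_{ij} q_{ji} = 1$ off-diagonal. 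Because $1 = -1$ in characteristic $2$, each square $z_i^2$ is primitive of degree $2$ and therefore vanishes in $\NA(K^1)$; the off-diagonal part gives $z_i z_j = q_{21}^{j-i} z_j z_i$. Hence $K$ is a quantum exterior algebra of dimension $2^3$, and $\dim \NA(V) = 2^4 \cdot 2^3 = 2^7$.

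Next, let $\widetilde A$ be the algebra defined by the generators $x_1, x_2, x_3$ and the relations listed in the proposition, where $z_{j+1} := x_2 z_j + q_{12} z_j x_2$ is interpreted recursively from $z_0 := x_3$. Each listed relation holds in $\NA(V)$: the Jordan-plane relations by Theorem \ref{thm:jordan-plane-char2}, the commutation $x_1 z_j = q_{12} z_j x_1$ by \eqref{eq:block+point1}, and the relations $z_i z_j = q_{21}^{j-i} z_j z_i$, $z_j^2 = 0$ and $z_k = 0$ for $k \ge 3$ by the previous paragraph. Thus there is a canonical surjection $\pi \colon \widetilde A \twoheadrightarrow \NA(V)$. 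To conclude both the isomorphism and the PBW basis it suffices to show that the $2^7$ monomials $x_1^{m_1} x_{21}^{m_2} x_2^{m_3} z_2^{n_2} z_1^{n_1} z_0^{n_0}$ already span $\widetilde A$, since then $\dim \widetilde A \le 2^7 = \dim \NA(V)$ forces equality.

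The spanning argument is the main obstacle, and it amounts to a straightforward but finicky normal-form reduction. An arbitrary word is rewritten by: (i) sweeping every $z$-factor past the $x$-factors on its right via $z_j x_1 = q_{12}^{-1} x_1 z_j$, $z_j x_{21} = q_{12}^{-2} x_{21} z_j$ (from \eqref{eq:block+point1}), and $z_j x_2 = q_{12}^{-1}(x_2 z_j + z_{j+1})$, modulo $z_k = 0$ for $k \ge 3$ which caps the $z$-indices; (ii) reducing the $x$-segment to Jordan-plane PBW form via \eqref{eq-jordan-plane-char2}; and (iii) normalising the $z$-segment by $z_i z_j = q_{21}^{j-i} z_j z_i$ and $z_j^2 = 0$. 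Termination follows by noetherian induction on the lexicographic pair (number of $x_2$-factors, word length), because the second rewrite either preserves the $x_2$-count while moving $z_j$ strictly rightward or decreases the $x_2$-count by one while replacing $z_j$ with $z_{j+1}$ (which is eventually zero). Checking that the three passes can be interleaved without reintroducing $x$'s into the $z$-segment is the only genuine care required.
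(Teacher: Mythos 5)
Your proposal is correct and follows essentially the same route as the paper: the splitting $\NA(V)\simeq K\#\NA(V_1)$ with $\dim\NA(V_1)=2^4$ from Theorem \ref{thm:jordan-plane-char2}, the identification of $K^1$ as the three-dimensional diagonal braided vector space with $c^2=\id$ so that $\NA(K^1)$ is a quantum exterior algebra of dimension $2^3$, and the resulting presentation and PBW basis. Your explicit rewriting argument showing that the listed relations suffice to span the abstractly presented algebra by the $2^7$ monomials is a step the paper leaves implicit, but it is the right way to close the presentation claim and your termination argument is sound.
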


\begin{prop}\label{nich-ex2} The algebra $\toba(\lstr_{\wp}(1,a))$, $a\neq 1$, is presented by 
generators $x_{1},x_2, x_3$ with defining relations \eqref{eq-jordan-plane-char2} and
\begin{align}
&x_{1}z_j= \wp \, z_jx_{1},\quad z_{j+1}=x_{2}z_j+\wp z_jx_{2}, \qquad j \in \N_0, \\
 z_i&z_j=\wp^{i-j}z_jz_i,\quad z_j^2=0,\quad z_k=0, \qquad i,j \in \I_{0,3},\quad k \geq  4. 
\end{align}
The dimension of $\toba(\lstr_{\wp}(1,a))$ is $2^{8}$, since it has a PBW-basis
\begin{align*}
\{ x_{1}^{m_{1}}x_{21}^{m_2} x_{2}^{m_3} z_{3}^{n_3} z_{2}^{n_{2}}z_{1}^{n_{1}} z_0^{n_0}\,:\, m_{1}, m_2,n_i \in\I_{0,1},\, m_3 \in \I_{0,3} \}. \hspace{30pt}\qed
\end{align*}
\end{prop}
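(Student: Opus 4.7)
The plan is to leverage the splitting $\NA(\lstr_{\wp}(1,a)) \simeq K \# \NA(V_{1})$ recorded in \S\ref{subsubsection:splitting} and exploited in the proof of Theorem~\ref{thm:point-block}. First I would verify that all the stated relations hold in $\NA(\lstr_{\wp}(1,a))$: the Jordan plane relations \eqref{eq-jordan-plane-char2} come from Theorem~\ref{thm:jordan-plane-char2} since $V_{1}\simeq \cV(1,2)$; the identities $x_{1}z_{j}=\wp\,z_{j}x_{1}$ and the recursive definition $z_{j+1}=x_{2}z_{j}+\wp z_{j}x_{2}$ are immediate from Lemma~\ref{le:-1bpz}; the diagonal quadratic relations $z_{i}z_{j}=\wp^{i-j}z_{j}z_{i}$ and $z_{j}^{2}=0$ follow from the braiding formula \eqref{braid-bloc+1ponto} with $q_{22}=1$ together with the presentation of $\NA(K^{1})$ displayed at the beginning of \S\ref{subsubsection:presentation-block-point}; finally $z_{k}=0$ for $k\ge 4$ is forced because Lemma~\ref{lemma:derivations-zn} shows that all skew-derivations $\partial_{i}$ annihilate $z_{k}$ for $k\ge 4$, and a nonzero element of a Nichols algebra of positive degree must have some nontrivial derivation.

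Let $\widetilde{B}$ denote the algebra presented by the listed generators and relations; the previous step provides a surjection $\widetilde{B}\twoheadrightarrow\NA(\lstr_{\wp}(1,a))$. Next I would show that $\widetilde{B}$ is linearly spanned by the proposed PBW monomials. Using the commutation relations of Lemma~\ref{le:-1bpz}, any element of $\widetilde{B}$ can be rewritten as a linear combination of products $w_{1}w_{2}$ with $w_{1}$ a word in $x_{1},x_{21},x_{2}$ and $w_{2}$ a word in $z_{0},z_{1},z_{2},z_{3}$, where the $x_{i}$'s are on the left and the $z_{j}$'s on the right. The Jordan plane relations then reduce $w_{1}$ to the form $x_{1}^{m_{1}}x_{21}^{m_{2}}x_{2}^{m_{3}}$ with $m_{1},m_{2}\in\I_{0,1}$ and $m_{3}\in\I_{0,3}$ (by Theorem~\ref{thm:jordan-plane-char2}), while the quadratic relations $z_{i}z_{j}=\wp^{i-j}z_{j}z_{i}$ together with $z_{j}^{2}=0$ reduce $w_{2}$ to the form $z_{3}^{n_{3}}z_{2}^{n_{2}}z_{1}^{n_{1}}z_{0}^{n_{0}}$ with $n_{i}\in\I_{0,1}$.

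Counting, this spanning set has exactly $2\cdot 2\cdot 4\cdot 2^{4}=2^{8}$ elements, so $\dim\widetilde{B}\le 2^{8}$. Since Theorem~\ref{thm:point-block} asserts $\dim\NA(\lstr_{\wp}(1,a))=2^{8}$, the surjection $\widetilde{B}\twoheadrightarrow\NA(\lstr_{\wp}(1,a))$ must be an isomorphism and the spanning set must be a basis. The step I expect to be the most delicate is the normal-form argument: even though every individual rewriting step is dictated by an explicit relation, one must verify that the chosen ordering can always be reached without uncovering hidden relations. This bookkeeping is substantially simplified by the bosonization identification $\NA(\lstr_{\wp}(1,a))\simeq \NA(K^{1})\,\underline{\otimes}\,\NA(V_{1})$ as vector spaces, which makes the PBW basis transparent as the tensor product of the bases of the two factors provided by Theorem~\ref{thm:jordan-plane-char2} and Lemma~\ref{K-basis}.
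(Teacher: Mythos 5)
Your proposal is correct and follows essentially the same route as the paper: the paper derives the presentation of $\NA(K^1)$ as a quantum linear space on $z_0,\dots,z_3$ from \eqref{braid-bloc+1ponto} with $q_{22}=1$, combines it with Theorem~\ref{thm:jordan-plane-char2} and the splitting $\NA(V)\simeq K\#\NA(V_1)$, and reads off the PBW basis and the dimension $2^8$ exactly as you do. The only difference is one of emphasis: you make explicit the surjection from the presented algebra and the normal-form spanning argument, which the paper leaves implicit.
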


\subsection{Realizations}\label{subsec:realization-block-pt}
Let $(g_{1}, \chi_{1}, \eta)$ be a YD-triple and $(g_2, \chi_2)$
a YD-pair for $H$, see \S \ref{subsec:realizations-block}. 
Let $(V, c)$ be a braided vector space  with braiding 
\eqref{eq:braiding-block-point}.
Then  $\cV_{g_{1}}(\chi_{1}, \eta) \oplus  \ku_{g_2}^{\chi_2} \in \ydh$
is a \emph{principal realization} of $(V, c)$  over  $H$ if
\begin{align*}
q_{ij}&= \chi_j(g_i),& &i, j\in \I_{2};& a&= q_{21}^{-1}\eta(g_2).
\end{align*}
Thus $(V, c) \simeq \cV_{g_{1}}(\chi_{1}, \eta) \oplus  \ku_{g_2}^{\chi_2}$ 
as braided vector space.
Hence, if $H$ is finite-dimensional and $(V, c)\simeq \lstr_{\wp}(1,a)$, $a\neq 0$,
then $\toba \big(\cV_{g_{1}}(\chi_{1}, \eta) \oplus  \ku_{g_2}^{\chi_2}\big) \# H$ is a finite-dimensional Hopf algebra.
Observe that the existence of a YD-triple for $H$ finite-dimensional  is not granted; for instance, 
$\wp = q_{12}$ should be a root of 1, otherwise there is no such triple.
Suppose that $\ord \wp = M \in \N$. Notice that $M$ is odd because $\car\ku=2$.
Here are some explicit examples of finite-dimensional pointed Hopf algebras 
like this: take $\Gamma = \langle g_{1} \rangle \times \langle g_2 \rangle$ where both $g_{1}$ and $g_2$ have order 
$2M$. 
Then $(V, c)$ is realized in
$\ydG$ with structure as in \eqref{eq:YD-structure-1block-1point}
and   $\dim \NA(V) \# \ku \Gamma = 2^9M^2$ (if $a=1$) or  $2^{10}M^2$ (if $a \neq 1$).

\section{One block and several points}\label{section:block-points}

Let $\theta \in \N_{\ge 3}$,  
$\Iw_\theta = \I_{\theta} \cup \{\fudos\}$; as usual $\lfloor x\rfloor$ is the integral part of $x \in \mathbb R$.
We fix a matrix $\bq = (q_{ij})_{i,j \in \I_{\theta}}$ with entries in  $\kut$ and 
$\af=(1, a_2, \dots, a_\theta) \in \ku^{\theta}$.
We assume that 
\begin{align} \label{eq:hyp-block-points}
q_{11} &= 1,& q_{1j}q_{j1}&=1, \text{ for all } j \in \I_{2,\theta}, &
\af & \neq (1, 0, \dots, 0).
\end{align}
Let $(V, c)$ be the braided vector space of dimension $\theta + 1$,
with a basis $(x_i)_{i\in\Iw_{\theta}}$ and braiding given  by 
\begin{align}\label{eq:braiding-block-several-point}
c(x_i \otimes x_j) &= \begin{cases}
q_{\lfloor i\rfloor j} x_j  \otimes x_i, &i\in \Iw_{\theta},\, j\in \I_{\theta};\\
q_{\lfloor i\rfloor 1} (x_{\fudos} + a_{\lfloor i\rfloor} x_{1}) \otimes x_{i}, & i\in \Iw_{\theta},\, j =\fudos.
\end{cases}
\end{align}
Then $V = V_{1} \oplus V_{2}$ where 
$V_{1} = \langle x_{1}, x_{\fudos} \rangle \simeq \cV(1,2)$ (the block) and $V_2 = \langle x_2, \dots, x_{\theta} \rangle$ (the points). If $\Gamma = \zt$ with basis $(g_h)_{h\in \I_{\theta}}$, then $V$ can be realized in $\ydG$   as in \eqref{eq:YD-structure-1block-1point}.
Here is the main result of this Section.

\begin{theorem}\label{thm:points-block} Assume \eqref{eq:hyp-block-points}.
Then $ \dim \NA (V) = \infty$. 
\end{theorem}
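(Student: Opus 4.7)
I would apply the splitting technique of \S\ref{subsubsection:splitting} to the decomposition $V = V_1 \oplus V_2$, with $V_1 = \langle x_1, x_{3/2}\rangle$ the block and $V_2 = \langle x_j : j\in\I_{2,\theta}\rangle$ the points. This gives $\NA(V) \simeq K \# \NA(V_1)$ with $K \simeq \NA(K^1)$, so it suffices to show $\dim \NA(K^1) = \infty$, where by \eqref{eq:1bpK^1},
\[
K^1 = \ad_c \NA(V_1)(V_2).
\]
For each $j \in \I_{2,\theta}$ introduce $z_n^{(j)} := (\ad_c x_{3/2})^n x_j$, $n\ge 0$. I would first generalize Lemmas \ref{le:-1bpz} and \ref{lemma:derivations-zn}: using the skew-derivations $\partial_i$ on $\NA(V)$ one checks that $z_n^{(j)} = 0$ for $n\ge 4$, for $n\ge 1$ when $a_j = 0$, and for $n = 3$ when $a_j = 1$, while the surviving $z_n^{(j)}$'s are nonzero and span $K^1$. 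The identities $x_1 z_n^{(j)} = q_{1j} z_n^{(j)} x_1$ and $x_{21} z_n^{(j)} = q_{1j}^2 z_n^{(j)} x_{21}$ (Lemma \ref{le:-1bpz} extended) imply that $\ad_c x_1$ and $\ad_c x_{21}$ act as zero on $K^1$, so $\NA(V_1)^{\ge 1}$ acts trivially and only the $\ku\Gamma$-part of $\NA(V_1)\#\ku\Gamma$ contributes to braiding.

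Next, I would carry out the analog of Lemma \ref{le:zcoact} for the coaction $\delta(z_n^{(j)})$, noting that the non-leading terms all contain factors from $\NA(V_1)^{\ge 1}$ which annihilate $K^1$ by the above. This produces a purely diagonal braiding on $K^1$:
\[
c(z_i^{(j)} \otimes z_m^{(k)}) = q_{1k}^{\,i}\, q_{j1}^{\,m}\, q_{jk}\; z_m^{(k)} \otimes z_i^{(j)}.
\]
Using $q_{1j}q_{j1}=1$ from \eqref{eq:hyp-block-points}, the Dynkin diagram of $K^1$ therefore has every vertex $z_n^{(j)}$ labelled $q_{jj}$, every internal edge (same $j$) labelled $q_{jj}^2$, and every crossing edge (different $j,k$) labelled $q_{jk}q_{kj}$.

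Now fix $j_0 \in \I_{2,\theta}$ with $a_{j_0} \neq 0$, which exists by the hypothesis $\af \neq (1,0,\dots,0)$. If $q_{j_0j_0} \neq 1$, then the sub-braided vector space $V_1 \oplus \ku x_{j_0} \simeq \lstr_{q_{1j_0}}(q_{j_0j_0},a_{j_0})$ already has $\dim \NA = \infty$ by Theorem \ref{thm:point-block}, and since sub-braided vector spaces give sub-Nichols algebras, $\dim \NA(V) = \infty$. Otherwise $q_{j_0j_0} = 1$, so the $j_0$-family alone contributes at least three vertices, all labelled $1$, with trivial internal edges. Since $\theta \ge 3$, pick any other $k \in \I_{2,\theta}$; adjoining the $k$-family (one, three, or four vertices according to $a_k$) yields a diagonal braided vector space of rank $\ge 4$ whose diagram -- in any of the resulting configurations controlled by $q_{kk}, q_{jk}q_{kj}$ and $a_k$ -- I would check by direct inspection against the classification lists of \cite{heck-wang, wang-rank3, wang-rank4} and conclude that it never appears among the finite-dimensional ones. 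This gives $\dim \NA(K^1) = \infty$ and hence $\dim \NA(V) = \infty$.

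\textbf{Main obstacle.} The delicate part will be the last step: the exhaustive case analysis that every rank-$\ge 4$ diagonal diagram arising from the computation of $K^1$ (varying $q_{kk}$, the crossing scalars, and whether $a_k = 0,1$, or neither) is ruled out by the classifications in \cite{heck-wang,wang-rank3,wang-rank4}. Care is also needed to ensure that the coaction-contamination terms indeed vanish, so that $K^1$ is genuinely of diagonal type in every configuration.
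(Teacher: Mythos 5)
Your setup coincides with the paper's: the same splitting $V=V_1\oplus V_2$, the same elements $z_{i,n}=(\ad_c x_{\frac{3}{2}})^n x_i$, the same count of which of them survive according to $a_i\in\{0,1\}$ or not, and the same conclusion that $K^1$ is of diagonal type with braiding $c(z_{i,m}\otimes z_{j,n})=q_{i1}^nq_{1j}^mq_{ij}\,z_{j,n}\otimes z_{i,m}$, hence vertex labels $q_{jj}$, internal edges $q_{jj}^2$ and crossing edges $q_{jk}q_{kj}$; this is exactly Lemma \ref{lemma:braiding-K-block-points}. Your first reduction (if $q_{j_0j_0}\neq 1$ for some $j_0$ with $a_{j_0}\neq 0$, conclude from Theorem \ref{thm:point-block} applied to $V_1\oplus\ku x_{j_0}$) is also the paper's.

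The gap is in your last step. Having forced $q_{j_0j_0}=1$, you "pick any other $k\in\I_{2,\theta}$" and assert that the resulting diagonal braided vector space of rank $\geq 4$ never occurs in the classification lists. By your own formula the only edges joining the $j_0$-family to the $k$-family carry the label $q_{j_0k}q_{kj_0}$; if this equals $1$ the two families decouple, and since $q_{j_0j_0}^2=1$ the $j_0$-family is itself totally disconnected with all vertices labelled $1$. The union can then perfectly well be finite-dimensional: for instance with $q_{j_0k}q_{kj_0}=1$, $a_k=0$ and $q_{kk}\in\G_{\infty}$ one gets a disconnected diagram with labels $1,1,1,q_{kk}$, and $\toba(K^1)$ is a finite braided tensor product of rank-one Nichols algebras. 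So "never appears among the finite-dimensional ones" is false for an arbitrary $k$; you must choose $k$ with $q_{j_0k}q_{kj_0}\neq 1$. This is precisely how the paper proceeds: it first reduces to the case where the Dynkin diagram of $(q_{ij})_{i,j\in\I_{2,\theta}}$ is connected, passes to $\theta=3$ with $\widetilde q_{23}:=q_{23}q_{32}\neq 1$, and then in each of the three cases on $(a_2,a_3)$ exhibits one concrete diagram --- a vertex labelled $q_{33}$ joined by three edges labelled $\widetilde q_{23}$ to three vertices labelled $1$ --- which is checked against \cite{wang-rank4}. Note finally that if \emph{no} $k$ with $q_{j_0k}q_{kj_0}\neq 1$ exists, your argument produces nothing at all; that disconnected configuration is exactly what is swept into the paper's "we may assume the diagram is connected", and your write-up needs either to justify that reduction or to treat this configuration separately rather than leave it inside an "inspection in all configurations" that, as stated, includes cases where the inspection would come out the other way.
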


We shall use the material from the previous Section  with $\fudos$ replacing 2
for instance $x_{\fudos 1} = x_{\fudos}x_{1} + x_{1} x_{\fudos}$.
We shall apply the splitting technique cf. \S\ref{subsubsection:splitting}. To describe $K^1$, we introduce the elements
\begin{align}\label{eq:zin}
z_{i,n} &:= (ad_c x_{\fudos})^n x_i,& i&\in\I_{2, \theta},& n&\in\N_0.
\end{align}

Let $i\in \I_{2,\theta}$, $n\in \N_0$. By Lemma \ref{le:-1bpz}, we have that 
\begin{align}
\label{eq:1block+points-action}
 g_{1}\cdot z_{i,n} &= q_{1i}z_{i,n},& z_{i,n+1} &= x_{\fudos}z_{i,n}+q_{1i}z_{i,n}x_{\fudos},
& x_{1} z_{i,n} &= q_{1,i}z_{i,n}x_{1}.
\end{align}

Consequently, 
\begin{align}\label{eq:1block+points-bis}
g_h\cdot z_{i,n} &= q_{h1}^nq_{hi}z_{i,n}, & h &\in \I_{2, \theta}.
\end{align}
In fact, $g_h \cdot z_{i,0}=g_h \cdot x_i =q_{hi} x_i$. Suppose that $g_h \cdot z_{i,n} = q_{h1}^nq_{hi}z_{i,n}$. Thus,
\begin{align*} g_h\cdot z_{i,n+1}&=g_h\cdot(x_{\fudos}z_{i,n}+q_{1i}z_{i,n}x_{\fudos})\\&= q_{h1}(x_{\fudos}+a_hx_{1})q_{h1}^nq_{hi}z_{i,n}+q_{1i}q_{h1}^{n+1}q_{hi}z_{i,n}(x_{\fudos}+a_hx_{1})\\&= q_{h1}^{n+1}q_{hi}(x_{\fudos}z_{i,n}+q_{1i}z_{i,n}x_{\fudos})= q_{h1}^{n+1}q_{hi}z_{i,n+1}.\end{align*}

As in Lemma \ref{lemma:derivations-zn}, we define for $i \in\I_{2, \theta}$,
\begin{align*} \mu_0^{(i)} &=1, &\mu_{1}^{(i)} &=a_i,&  \mu_{2}^{(i)} &= a_i, & \mu_{3}^{(i)} &= a_i(a_i+1), \\
y_0 &= 1, &y_{1} &= x_{1}, &  y_{2} &= x_{\fudos 1}, & y_{3} &= x_{1}x_{\fudos 1}.
\end{align*}
Hence $\partial_h(z_{i,n})=0$ for $i \in\I_{2, \theta}$, $n\in\N_0$, $i \neq h \in \Iw_{\theta}$ and
\begin{align*}
&\partial_i(z_{i,n})=\mu_n^{(i)} y_n, \quad n\in\I_{0,3},&  &\partial_i(z_{i,n})=0,\quad n\geq 4.&
\end{align*}

For $i \in \I_{2,\theta}$, we define
\begin{align*} J_i&= \begin{cases}
\{(i,0)\},& a_i=0,\\
\{(i,0),(i,1),(i,2)\}, &  a_i=1,\\
\{(i,0),(i,1),(i,2), (i,3)\}, &  a_i\notin \{0, \, 1\}, 
\end{cases} & J &=  \bigcup_{i \in \I_{2,\theta} } J_i.
\end{align*}

\begin{lemma}\label{lemma:braiding-K-block-points} 
The family $B = (z_{i,n})_{(i,n) \in J}$ is a basis of the braided vector space $K^1$,
which is of diagonal type with braiding
\begin{align}\label{braid:block+points}
c(z_{i,m} \otimes z_{j,n} ) &= q_{i1}^nq_{1j}^m  q_{ij} z_{j,n} \otimes z_{i,m},& 
(i,m), (j,n) &\in J.
\end{align}
\end{lemma}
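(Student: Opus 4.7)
The plan is to mirror the one-block-one-point analysis of Section \ref{section:block-point}, treating each point $x_i$, $i \in \I_{2,\theta}$, independently via the chain $(z_{i,n})_{n \ge 0}$.

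First, I would check that every $z_{i,n}$ lies in $K^1 = \ad_c \NA(V_1)(V_2)$: by construction $z_{i,n+1} = (\ad_c x_{\fudos}) z_{i,n}$, while \eqref{eq:1block+points-action} together with $\deg x_1 = g_1$ gives
\begin{align*}
(\ad_c x_1)z_{i,n} = x_1 z_{i,n} + (g_1\cdot z_{i,n})x_1 = q_{1i}z_{i,n}x_1 + q_{1i}z_{i,n}x_1 = 0,
\end{align*}
so iterated adjoint action of the $\NA(V_1)$-generators on $V_2$ stays in the span of the $z_{i,n}$. Second, the derivation formulas stated just before the lemma (proved exactly as in Lemma \ref{lemma:derivations-zn}) determine which $z_{i,n}$ vanish: in characteristic $2$, $\mu_1^{(i)} = a_i = 0$ forces $z_{i,1} = 0$ when $a_i = 0$; $\mu_3^{(i)} = a_i(a_i+1) = 0$ forces $z_{i,3} = 0$ when $a_i = 1$; and $\partial_h(z_{i,4}) = 0$ for every $h$ (as in Lemma \ref{lemma:derivations-zn}) forces $z_{i,4} = 0$ in every case. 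Linear independence of $B$ is then immediate since its elements are $\zt$-homogeneous of pairwise distinct degrees $n g_1 + g_i$ and are nonzero.

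For spanning, I would use that $\NA(V_1)$ is spanned by the monomials in $x_1, x_{\fudos 1}, x_{\fudos}$ by Theorem \ref{thm:jordan-plane-char2}; since $\ad_c x_1$ and $\ad_c x_{\fudos 1}$ annihilate every $z_{i,n}$ (the latter because $\deg x_{\fudos 1} = 2 g_1$ and $x_{\fudos 1} z_{i,n} = q_{1i}^2 z_{i,n} x_{\fudos 1}$, in analogy with \eqref{eq:block+point1}), only iterated application of $\ad_c x_{\fudos}$ contributes, producing precisely the $z_{i,n}$.

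For the braiding, the key computation is an inductive analog of Lemma \ref{le:zcoact}: $\delta(z_{i,n}) \in (\NA(V_1)\#\ku\Gamma)\otimes K^1$ is a sum of terms whose left tensor factors lie in $\ku\langle x_1, x_{\fudos 1}\rangle \cdot g_1^k g_i$, with the unique grouplike summand being $g_1^n g_i \otimes z_{i,n}$. In the braiding $c(z_{i,m}\otimes z_{j,n}) = (z_{i,m})_{(-1)}\cdot z_{j,n}\otimes (z_{i,m})_{(0)}$, all terms with a nontrivial $x_1$- or $x_{\fudos 1}$-factor act as zero on $z_{j,n}$ by the preceding paragraph, leaving only the grouplike summand. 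Using \eqref{eq:1block+points-action}--\eqref{eq:1block+points-bis} one computes $g_1^m g_i \cdot z_{j,n} = q_{1j}^m q_{i1}^n q_{ij}\, z_{j,n}$, which gives \eqref{braid:block+points}. The main obstacle is the bookkeeping in this coaction induction, guaranteeing that $x_{\fudos}$ never appears in the left tensor factor; this parallels the phenomenon already encoded in Lemma \ref{le:zcoact} for the one-point case, and transfers to the present setting because the inductive step for $z_{i,n+1}$ only involves commuting $x_{\fudos}$ past $z_{i,n}$ in $K\#\NA(V_1)$, which preserves the subalgebra $\ku\langle x_1, x_{\fudos 1}\rangle \ku\Gamma$ in the left factor.
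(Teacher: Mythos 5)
Your argument is correct and follows the same route as the paper, which simply argues as in Lemma \ref{K-basis} for the basis claim and computes the coaction as in Lemma \ref{le:zcoact} to extract the diagonal braiding; you have merely filled in the details (vanishing of $(\ad_c x_1)z_{i,n}$ and $(\ad_c x_{\fudos 1})z_{i,n}$, the role of the $\mu_n^{(i)}$ in cutting down to $J$, and the observation that only the grouplike summand $g_1^m g_i\otimes z_{i,m}$ of the coaction survives in the braiding). No gaps.
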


\pf Arguing as in Lemma \ref{K-basis}, we see that $B$ is a basis. 
We compute the coaction \eqref{eq:coaction-K^1} on $z_{j,n}$ as in Lemma \ref{le:zcoact}
and then \eqref{braid:block+points} follows.
\epf

\noindent{\it Proof of Theorem \ref{thm:points-block}.}
It is enough to show that $\dim \toba(K^1) = \infty$.
By \eqref{braid:block+points} we may assume that the Dynkin diagram of the  matrix 
$(q_{ij})_{i, j \in \I_{2, \theta}}$ is connected.
We then may assume that $\theta=3$ by taking a suitable sub-diagram; thus 
$\widetilde q_{23} := q_{23} q_{32} \neq 0$.
We distinguish then three cases.
First assume $\af=(1, a, 0)$ with $a \neq 0$. By Theorem \ref{thm:point-block} 
applied to $V_{1} \oplus \langle x_2 \rangle$, $q_{22}=1$.  By Lemma \ref{lemma:braiding-K-block-points}, 
$K^1$ is of diagonal type. If $a = 1$, then  its  Dynkin diagram is
\begin{align*}
\xymatrix@R-5pt{& &  \circ^{1} \\ 
	\circ^{1}\ar@{-}[r]^{\widetilde q_{23}} & \circ^{q_{33}} \ar@{-}[ru]^{\widetilde q_{23}}\ar@{-}[r]^{\widetilde q_{23}} &   \circ^{1} }
\end{align*}        
\noindent which does not appear in the list in \cite{wang-rank4}. 
If $a \neq 1$, then the diagram above appears a sub-diagram.
The case $\af=(1,0,b)$ with $b \neq 0$ is similar. 
As well, if $\af=(1,a,b)$ with $a, b \neq 0$, then the diagram above also appears a sub-diagram of that of $K^1$.
\qed

\section{Several blocks and one point}\label{section:blocks-point}

Let $t\geq 2$ and $\theta = t +1$. As in \cite{aah-triang,aah-oddchar} we use  the notation:
\begin{align*}
\Idd_k &= \{k, k + \tfrac{1}{2}\},& k&\in \I_t;
& \Idd &= \Idd_{1} \cup \dots \cup \Idd_{t} \cup \{\theta\}.
\end{align*}
We fix a matrix $\bq = (q_{ij})_{i,j \in \I_{\theta}}$ with entries in  $\kut$
and $\ba=(a_{1},\ldots,a_t) \in \ku^{t}$. We assume that 
\begin{align} \label{eq:hyp-blocks-point}
q_{ii} &=1,& q_{ij}q_{ji} &=1, & \text{for all }  i\neq j &\in \I_{\theta}; & a_j &\neq 0, &j &\in \I_t.
\end{align}

Let  $\pos(\bq,\ba)$ be the braided vector space with basis $(x_i)_{i\in\Idd}$ and braiding
\begin{align}\label{eq:braiding-several-blocks-1pt}
c(x_i\ot x_j) &= \left\{ \begin{array}{ll}
q_{ \lfloor i \rfloor   \lfloor j\rfloor} \, x_j\otimes x_i, & \lfloor i\rfloor \leq t, \, \lfloor i\rfloor \neq \lfloor j\rfloor, \\
 \, x_j\otimes x_i, & \lfloor i\rfloor =j \leq t, \\
( \, x_j+x_{\lfloor j\rfloor})\otimes x_i, & \lfloor i\rfloor \leq t, \, j=\lfloor i\rfloor +\frac{1}{2}, \\
q_{\theta j} \, x_j\otimes x_{\theta}, & i = \theta, \, j\in\I_{\theta}, \\
q_{\theta \lfloor j\rfloor} \, (x_j+a_{\lfloor j\rfloor} x_{\lfloor j\rfloor})\otimes x_{\theta}, & i = \theta, \, j\notin\I_{\theta}.
\end{array} \right.
\end{align}
Let $V_1=W_{1}\oplus\ldots\oplus W_t$ where
$W_{k} = \langle x_{k}, x_{\fkdos} \rangle \simeq \cV(1,2)$ (the blocks);
and let  $V_2 = \langle x_{\theta} \rangle$ (the point).
Then $\pos(\bq,\ba) = V_{1} \oplus V_{2}$. If $\Gamma = \zt$ 
with basis $(g_i)_{i\in \I_{\theta}}$, then there is an action of $\Gamma$ on $V$ determined by  
\begin{align}\label{eq:YD-structure-blocks-1point}
c(x_i\ot x_j) &= g_{i}\cdot x_{j} \otimes x_i,& i&\in \I_{\theta}, \ j \in \Idd. 
\end{align}
Thus $V$ is realized in $\ydG$ with the grading $\deg(x_i) = g_{\lfloor i \rfloor}$, $i \in \Idd$.

\medbreak
Here is the main result of this Section; see \eqref{eq:dim-poseydon} for the explicit formula of the  dimension.

\begin{theorem}\label{teo-blocks-point}
Assume \eqref{eq:hyp-blocks-point}. Then $ \dim \NA (\pos(\bq,\ba)) < \infty$. 
\end{theorem}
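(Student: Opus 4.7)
The plan is to apply the splitting technique of \S\ref{subsubsection:splitting} to the decomposition $V = V_1 \oplus V_2$, yielding $\NA(V) \simeq K \# \NA(V_1)$ with $K = \NA(K^1)$ and $K^1 = \ad_c\NA(V_1)(V_2)$. I would proceed in three steps.

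First, I would show $\dim \NA(V_1) = 2^{4t}$. By \eqref{eq:hyp-blocks-point}, the braiding between two distinct blocks $W_k, W_l$ ($k \neq l$ in $\I_t$) satisfies $c^2 = q_{kl}q_{lk}\id = \id$ on $W_k \otimes W_l$, so the blocks are braid-commutative. This yields a decomposition $\NA(V_1) \simeq \NA(W_1) \underline{\otimes} \cdots \underline{\otimes} \NA(W_t)$ as braided Hopf algebras. Each factor is a restricted Jordan plane of dimension $2^4$ by Theorem \ref{thm:jordan-plane-char2}, giving the claim.

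Second, I would determine $K^1$ as a braided vector space. Set $z_{k,n} := (\ad_c x_{\fkdos})^n x_\theta$ for $k \in \I_t$, $n \in \N_0$, and consider also the iterated brackets obtained by further applying $\ad_c x_{l+1/2}$ for various $l$. Following Lemmas \ref{le:-1bpz} and \ref{lemma:derivations-zn}: the operators $\ad_c x_{\fkdos}$ for distinct $k$ commute (the potential commutator scalar $q_{kl}q_{lk}$ equals $1$), so iterated brackets are well-defined independently of the order of application; and $\partial_\theta$-analysis, replaying the argument of Theorem \ref{thm:point-block} for each block-point pair $(W_k, V_2)$ (legitimate because $q_{k\theta}q_{\theta k} = 1$ and $q_{\theta\theta} = 1$), yields $z_{k,n} = 0$ for $n \geq 4$ and $z_{k,3} = 0$ precisely when $a_k = 1$. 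The surviving iterated brackets form a basis of $K^1$ indexed by a finite set $\mathcal{A}$. Computing the coaction in the style of Lemma \ref{le:zcoact}, $K^1$ turns out to be of diagonal type, and every scalar entering the braiding is a product of factors $q_{ij}q_{ji} = 1$, $q_{\theta\theta} = 1$, and signs that collapse to $1$ since $\car\ku = 2$; thus the braiding on $K^1$ is trivial.

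Finally, since $K^1$ has trivial diagonal braiding, $\NA(K^1)$ is the exterior algebra on $|\mathcal{A}|$ generators, of dimension $2^{|\mathcal{A}|}$. Hence $\dim \NA(V) = \dim K \cdot \dim \NA(V_1) = 2^{4t + |\mathcal{A}|}$ is finite; the explicit form of $\mathcal{A}$ will be pinned down in Proposition \ref{generatores-relations-blocks-point}. The main obstacle is the middle step: the iterated-bracket calculations across several blocks must be orchestrated carefully. The delicate points are showing that brackets from different blocks commute and contribute independently (so the basis of $K^1$ really factors over blocks), that the vanishing patterns inherited from Section \ref{section:block-point} are not disturbed by the presence of further blocks, and that all scalars arising in the induced braiding on $K^1$ reduce to $1$ under the hypothesis \eqref{eq:hyp-blocks-point} and the collapse of signs in characteristic $2$.
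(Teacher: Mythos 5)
Your proposal follows essentially the same route as the paper: split off the point, generate $K^1$ by the iterated brackets $\sch_{\bn}=(\ad_c x_{3/2})^{n_1}\cdots(\ad_c x_{t+1/2})^{n_t}x_\theta$ with $\bn$ bounded componentwise by $2$ or $3$ according to whether $a_j=1$ or not, verify that brackets from different blocks commute, and conclude that $K^1$ is of diagonal type with totally disconnected Dynkin diagram, giving $\dim\NA(V)=2^{4t+|\mathcal A|}$. One small correction: the braiding matrix of $K^1$, namely $p_{\bm,\bn}=\prod_{i,j}q_{ij}^{m_in_j}q_{i\theta}^{m_i}q_{\theta j}^{n_j}$, is not literally trivial --- only $p_{\bm,\bm}=1$ and $p_{\bm,\bn}p_{\bn,\bm}=1$ follow from \eqref{eq:hyp-blocks-point} --- but this still yields a quantum exterior algebra of dimension $2^{|\mathcal A|}$, so your conclusion is unaffected.
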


Let $j\in\I_t$. We set $x_{j+\frac{1}{2} \, j}= x_{j+\frac{1}{2}} x_j + x_j x_{j+\frac{1}{2}}$ and define 
\begin{align*}
\mu_0^{(j)} &=1, &\mu_{1}^{(j)} &=a_j,&  \mu_{2}^{(j)} &= a_j, & \mu_{3}^{(j)} &= a_j(a_j+1),  & \mu_{n}^{(j)} &= 0\,\,\,\text{ if }n\geq 4, 
\\
y_{j,0} &= 1, &y_{j,1} &= x_{j}, &  y_{j,2} &= x_{j + \frac{1}{2} \, j}, & y_{j,3} &= x_{j}x_{j + \frac{1}{2} \, j}, & y_{j,n} &= 0\,\,\,\text{ if }n\geq 4.
\end{align*}

To apply the splitting technique, see \S\ref{subsubsection:splitting}, we introduce the elements
\begin{align}
\label{eq:defn-sch-several-blocks}
\sch_{\bn} &:= (\ad_c x_{\fudos})^{n_{1}} \dots (\ad_c x_{t+\frac{1}{2}})^{n_{t}} x_{\theta}, & & \bn = (n_{1},\dots,n_{t})\in\N^t_0.
\end{align}

We start establishing some useful formulas.

\begin{lemma}\label{lem1-blocks-point} Let $j\in \I_t$ and  $\bn = (n_{1},\dots,n_{t})\in\N^t_0$. Then 
	\begin{align}
	&\quad \ad_c x_j(\sch_{\bn})=\ad_c x_{j+\frac{1}{2}\,j}(\sch_{\bn})=0 \label{eq1-lem},\\[.8em] 
	&\qquad\ad_c x_{j+\frac{1}{2}}(\sch_{\bn})=\prod\limits_{i<j}q_{ji}^{n_i}\sch_{\bn+\mathbf{e}_j}, \label{eq2-lem}\\
	&g_j\cdot \sch_{\bn}=q_{j\theta}\prod\limits_{i=1}^t q_{ji}^{n_i},\quad g_{\theta}\cdot \sch_{\bn}=\prod\limits_{i=1}^t q_{\theta i}^{n_i} \sch_{\bn} \label{eq3-lem}\\
	\partial_j(\sch_{\bn})&=\partial_{j+1}(\sch_{\bn})=0,\qquad  \partial_{\theta}(\sch_{\bn})=\prod\limits_{i=1}^t\mu_{n_i}^{(i)}y_{1,n_{1}}\ldots y_{t, n_t}.\label{eq4-lem}
	\end{align}
\end{lemma}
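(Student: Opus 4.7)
The plan is to prove the four identities simultaneously by induction on $|\bn| = n_1 + \cdots + n_t$, peeling off a single outer $\ad_c x_{k+\frac{1}{2}}$ at each step. For the base case $\bn = \mathbf{0}$, we have $\sch_{\mathbf{0}} = x_\theta$: then \eqref{eq3-lem} and \eqref{eq4-lem} are immediate from \eqref{eq:YD-structure-blocks-1point} and the definition of the skew-derivations, while \eqref{eq2-lem} is the definition of $\sch_{\mathbf{e}_j}$. The only nontrivial base case is \eqref{eq1-lem}: the sub-braided space $\langle x_j, x_\theta\rangle$ is diagonal with $q_{jj} = 1$ and $q_{j\theta}q_{\theta j} = 1$ by \eqref{eq:hyp-blocks-point}, so the quantum Serre relation $\ad_c x_j(x_\theta) = 0$ holds in $\NA(V)$; expanding $x_{j+\frac{1}{2}\,j} = x_{j+\frac{1}{2}}x_j + x_j x_{j+\frac{1}{2}}$ and pushing $x_j$ past $x_\theta$ via this Serre relation (plus the braided Leibniz rule for $\ad_c$) gives $\ad_c x_{j+\frac{1}{2}\,j}(x_\theta) = 0$ as well.

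For the inductive step, I write $\sch_{\bn+\mathbf{e}_k}$ as $\ad_c x_{k+\frac{1}{2}}(\sch_{\bn})$ up to a known scalar coming from \eqref{eq2-lem} applied to $\bn$. The formula \eqref{eq3-lem} at the next level then follows from the $\Gamma$-equivariance of $\ad_c$: for $h\neq k$ the action produces just a diagonal factor $q_{hk}$, whereas for $h = k$ the block relation $g_k\cdot x_{k+\frac{1}{2}} = x_{k+\frac{1}{2}}+x_k$ yields a spurious $\ad_c x_k(\sch_{\bn})$-term that vanishes by the inductive \eqref{eq1-lem}. For \eqref{eq1-lem} and \eqref{eq2-lem} at the next level, I commute $\ad_c x_j$ (resp.\ $\ad_c x_{j+\frac{1}{2}}$) past the outer $\ad_c x_{k+\frac{1}{2}}$ using a braided Jacobi identity: when $j\neq k$, the diagonal braiding contributes exactly the scalar $q_{jk}$ per crossing (explaining the $\prod_{i<j}q_{ji}^{n_i}$ prefactor), and when $j = k$ the inner commutator reduces to $\ad_c x_{k+\frac{1}{2}\,k}$, which already vanishes on $\sch_{\bn-\mathbf{e}_k}$ by \eqref{eq1-lem}.

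Finally, \eqref{eq4-lem} is handled via the skew-derivation identity \eqref{eq:skewderivations} applied to $\ad_c x_{k+\frac{1}{2}}(\sch_{\bn}) = x_{k+\frac{1}{2}}\sch_{\bn} + (g_k\cdot\sch_{\bn})x_{k+\frac{1}{2}}$: for $\partial_j$ or $\partial_{j+\frac{1}{2}}$ with $j\neq k$ both contributions vanish by induction, while for $\partial_\theta$ the calculation reduces (using $\partial_\theta(x_{k+\frac{1}{2}}) = 0$) to the one-block computation already performed in Lemma \ref{lemma:derivations-zn}, producing exactly the update that replaces $\mu_{n_k}^{(k)}y_{k,n_k}$ by $\mu_{n_k+1}^{(k)}y_{k,n_k+1}$ while leaving the other factors untouched. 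The main obstacle I expect is the bookkeeping of block-induced stray terms: every time a group element or an $\ad_c$ operator is applied with an index matching an already-present block, an extra summand of the form $\ad_c x_j$ or $\ad_c x_{j+\frac{1}{2}\,j}$ applied to a lower-level $\sch_{\bm}$ appears, so the four identities genuinely must be advanced in tandem—each new instance of \eqref{eq2-lem} or \eqref{eq3-lem} consumes an earlier instance of \eqref{eq1-lem}.
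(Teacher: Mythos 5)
Your overall architecture --- a simultaneous induction on $|\bn|$, peeling off one outer $\ad_c x_{k+\frac{1}{2}}$, with the four identities advanced in tandem --- is exactly the expected one; the paper itself only says ``similar to \cite[Lemma 7.2.3]{aah-triang}'', and its own Lemma \ref{le:-1bpz} runs the same kind of induction in the one-block case. The parts of your argument concerning $\ad_c x_j$, the group action, \eqref{eq2-lem} and \eqref{eq4-lem} are sound as sketched.

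The genuine gap is in the second half of \eqref{eq1-lem}, the vanishing of $\ad_c x_{j+\frac{1}{2}\,j}(\sch_{\bn})$. Since $\ad_c$ is an algebra map, $\ad_c x_{j+\frac{1}{2}\,j}=\ad_c x_{j+\frac{1}{2}}\circ\ad_c x_j+\ad_c x_j\circ\ad_c x_{j+\frac{1}{2}}$; once $\ad_c x_j(\sch_{\bn})=0$ and \eqref{eq2-lem} are known, this gives
$\ad_c x_{j+\frac{1}{2}\,j}(\sch_{\bn})=\big(\prod_{i<j}q_{ji}^{n_i}\big)\,\ad_c x_j(\sch_{\bn+\mathbf{e}_j})$,
so the two quantities ``$\ad_c x_{j+\frac{1}{2}\,j}$ at level $|\bn|$'' and ``$\ad_c x_j$ at level $|\bn|+1$'' are nonzero scalar multiples of one another. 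Every manipulation you propose --- expanding $x_{j+\frac{1}{2}\,j}$ and pushing $x_j$ past $x_\theta$ with the Serre relation in the base case, or the braided Jacobi identity with ``the inner commutator reduces to $\ad_c x_{k+\frac{1}{2}\,k}$'' in the step --- only converts one of these expressions into the other, so the reduction is circular: you can check that the base-case computation returns $\ad_c x_{j+\frac{1}{2}\,j}(x_\theta)=\ad_c x_j(\sch_{\mathbf{e}_j})=\ad_c x_{j+\frac{1}{2}\,j}(x_\theta)$. An independent input is needed to break the loop: either the skew-derivation test (show $\partial_i\big(\ad_c x_{j+\frac{1}{2}\,j}(\sch_{\bn})\big)=0$ for all $i\in\Idd$ using \eqref{eq3-lem}, \eqref{eq4-lem} and the already-established Serre relations --- this is what the paper does for $x_{21}z_0=q_{12}^2z_0x_{21}$ in Lemma \ref{le:-1bpz}), or, in the inductive step, the restricted Jordan plane relation $x_{j+\frac{1}{2}}^2x_j+x_jx_{j+\frac{1}{2}}^2+x_jx_{j+\frac{1}{2}}x_j=0$ of Theorem \ref{thm:jordan-plane-char2}, which yields the identity $x_{j+\frac{1}{2}\,j}\,x_{j+\frac{1}{2}}=x_{j+\frac{1}{2}}\,x_{j+\frac{1}{2}\,j}+x_{j+\frac{1}{2}\,j}\,x_j$ used in the paper's one-block computation. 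You have the derivation formulas \eqref{eq4-lem} available in your scheme; you just never deploy them where they are actually indispensable.
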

\begin{proof} Similar to the proof of \cite[Lemma 7.2.3]{aah-triang}.
\end{proof}

Let us set
\begin{align*}
b_j &:=2, \text{ if } a_j =1, & b_j &:=3, \text{ if } a_j \neq  1, &
\text{and }  \mathbf{b}=(b_{1},\ldots,b_t)\in \N^t.
\end{align*}

Arguing as in \cite[$\S 7.2$]{aah-triang}, we conclude from
Lemma \ref{lem1-blocks-point}:

\begin{lemma}\label{lem-basis-blocks-point} 
Let $\mathcal{A}=\{\bn\in \N_0^t\,:\, \bn\leq\mathbf{b}\}$ ordered lexicographically.
\begin{enumerate}[leftmargin=*,label=\rm{(\roman*)}]
\item  The elements $(\sch_{\bn})_{\bn \in \mathcal{A}}$ form a basis of $K^1$. \smallbreak
\item The coaction \eqref{eq:coaction-K^1} on $\sch_{\bn}$ is given by
\begin{align*}
\delta(\sch_{\bn})=\sum\limits_{0\leq \mathbf{k}\leq \bn}\nu_{\mathbf{k}}^{\bn} y_{1, n_{1}-k_{1}}\ldots y_{t, n_t-k_t}g_{1}^{k_{1}}\ldots g_t^{k_t}g_{\theta}\otimes \sch_{\mathbf{k}} 
\end{align*}
for some scalars $\nu_{\mathbf{k}}^{\bn}$, $0\leq \mathbf{k}\leq \bn$,  with $\nu_{\bn}^{\bn}=1$. \smallbreak
\item The braided vector space $K^1$ is of diagonal type with respect to the basis $(\sch_{\bn})_{\bn \in \mathcal{A}}$ with matrix braiding $(p_{\bm,\bn})_{\bm,\bn\in \mathcal{A}}$, where
\[p_{\bm,\bn}=\prod\limits_{i,j=1}^t q_{ij}^{m_in_j}q_{i\theta}^{m_i}q_{\theta j}^{n_j}.\]
Hence, the corresponding generalized Dynkin diagram has labels
\begin{align*}
&p_{\bm,\bm}=1& &p_{\bm,\bn}p_{\bn,\bm}=1,& &\bm\neq \bn.&
\end{align*}
\end{enumerate}
\end{lemma}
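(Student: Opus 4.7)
I would split into spanning and linear independence. For spanning, since $V_2=\ku x_\theta$ is one-dimensional, $K^1=\ad_c\NA(V_1)(x_\theta)$. The blocks $W_1,\dots,W_t$ braid pairwise diagonally with $q_{ij}q_{ji}=1$, so $\NA(V_1)\simeq\NA(W_1)\underline{\otimes}\cdots\underline{\otimes}\NA(W_t)$, and each factor has a PBW basis by Theorem~\ref{thm:jordan-plane-char2}. Using that $\ad_c$ is multiplicative and that \eqref{eq1-lem} makes $\ad_c x_j$ and $\ad_c x_{j+\frac{1}{2}\,j}$ annihilate every $\sch_\bm$, only powers of the $\ad_c x_{j+\frac{1}{2}}$ survive; reordering via \eqref{eq2-lem} expresses every $\ad_c(w)(x_\theta)$ as a combination of $\sch_\bn$'s with $\bn\in\N_0^t$. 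To cut down to $\bn\in\mathcal{A}$ I would use the Nichols algebra criterion that a homogeneous positive-degree element vanishes once all derivations do; \eqref{eq4-lem} reduces this to $\partial_\theta(\sch_\bn)=\prod_i\mu_{n_i}^{(i)}y_{i,n_i}$, which vanishes precisely when some $n_i>b_i$ (using $\car\ku=2$ so that $\mu_3^{(i)}=a_i(a_i+1)=0$ whenever $a_i=1$). Linear independence of $(\sch_\bn)_{\bn\in\mathcal{A}}$ follows because all $\mu_{n_i}^{(i)}\ne 0$ in that range and the monomials $y_{1,n_1}\cdots y_{t,n_t}$ are linearly independent in $\NA(V_1)$, as distinct $\bn$'s yield distinct PBW monomials.

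\textbf{Part (ii): coaction formula.} Here I would argue by induction on $|\bn|$. The base case is $\delta(x_\theta)=g_\theta\otimes x_\theta$. For the inductive step, \eqref{eq2-lem} gives $\sch_{\bn+\mathbf{e}_j}=\bigl(\prod_{i<j}q_{ji}^{-n_i}\bigr)\ad_c x_{j+\frac{1}{2}}(\sch_\bn)$; since $\car\ku=2$ this equals $\bigl(\prod_{i<j}q_{ji}^{-n_i}\bigr)\bigl(x_{j+\frac{1}{2}}\sch_\bn+(g_j\cdot\sch_\bn)x_{j+\frac{1}{2}}\bigr)$. I would then expand $\Delta$ in $\NA(V)\#\ku\Gamma$ multiplicatively using $\Delta(x_{j+\frac{1}{2}})=x_{j+\frac{1}{2}}\otimes 1+g_j\otimes x_{j+\frac{1}{2}}$ together with the inductive coaction of $\sch_\bn$, and apply $\pi_{\NA(V_1)\#\ku\Gamma}\otimes\id$ to annihilate every term whose first tensorand contains $x_\theta$. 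What remains is indexed by $\bk\le\bn+\mathbf{e}_j$ in the required form, and comparing tensor factors yields a recursion for $\nu_\bk^{\bn+\mathbf{e}_j}$ in terms of the $\nu_{\bullet}^\bn$; the leading coefficient $\nu_{\bn+\mathbf{e}_j}^{\bn+\mathbf{e}_j}=1$ comes from the term $x_{j+\frac{1}{2}}\cdot g_1^{n_1}\cdots g_t^{n_t}g_\theta\otimes\sch_\bn$ via the rightmost summand in $\Delta(x_{j+\frac{1}{2}})$. This computation is parallel to the one in \cite[Lemma~4.2.5]{aah-triang}.

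\textbf{Part (iii): diagonal type and braiding.} To extract $c$ from $\delta$ I would apply $c(\sch_\bm\otimes\sch_\bn)=\sch_\bm{}_{(-1)}\cdot\sch_\bn\otimes\sch_\bm{}_{(0)}$ to the formula of (ii) term by term. The action of $y_{i,m_i-k_i}$ is the braided adjoint, and as soon as $m_i-k_i\ge 1$ this element is a monomial beginning with $x_i$ or $x_{j+\frac{1}{2}\,j}$, so \eqref{eq1-lem} forces it to annihilate $\sch_\bn$. Hence only the term $\bk=\bm$ survives, contributing $(g_1^{m_1}\cdots g_t^{m_t}g_\theta)\cdot\sch_\bn\otimes\sch_\bm$; inserting \eqref{eq3-lem} gives the scalar
\[
p_{\bm,\bn}=\Bigl(\prod_{j}q_{j\theta}^{m_j}\Bigr)\Bigl(\prod_{i,j}q_{ji}^{m_jn_i}\Bigr)\Bigl(\prod_{i}q_{\theta i}^{n_i}\Bigr)=\prod_{i,j=1}^t q_{ij}^{m_in_j}q_{i\theta}^{m_i}q_{\theta j}^{n_j}.
\]
The identities $p_{\bm,\bm}=1$ and $p_{\bm,\bn}p_{\bn,\bm}=1$ follow from $q_{ii}=1$ and $q_{ij}q_{ji}=1$.

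\textbf{Main obstacle.} The real bookkeeping lives in Part (ii): tracking the recursion for the $\nu_\bk^\bn$ through the inductive computation of $\Delta$ is the most delicate step. Fortunately, Parts (i) and (iii) only need the qualitative shape of the coaction formula together with $\nu_\bn^\bn=1$ and the observation that the off-diagonal summands all carry an $x_i$ or $x_{j+\frac{1}{2}\,j}$ factor, so the explicit values of $\nu_\bk^\bn$ never need to be computed.
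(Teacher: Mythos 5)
Your proposal is correct and follows the same route the paper intends: the paper's proof is only the one-line citation ``Arguing as in \cite[\S 7.2]{aah-triang}, we conclude from Lemma \ref{lem1-blocks-point}'', and your argument is precisely the expansion of that reference --- spanning and vanishing via the skew-derivations and \eqref{eq4-lem}, the inductive computation of the coaction as in \cite[Lemma 4.2.5]{aah-triang}, and the extraction of the diagonal braiding from \eqref{eq1-lem} and \eqref{eq3-lem}. No gaps; all the points you rely on (multiplicativity of $\ad_c$, the PBW basis of $\NA(V_1)$ as a braided tensor product of restricted Jordan planes, and $\mu_3^{(i)}=0$ exactly when $a_i=1$ in characteristic $2$) are the ones the cited argument uses.
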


\noindent {\it Proof of Theorem \ref{teo-blocks-point}.} By Lemma \ref{lem-basis-blocks-point}, $\dim \toba(K^1)=2^{|\mathcal{A}|}$. 
Now the blocks $W_i$ and $W_j$, $i\neq j$, commute in the braided sense by definition, therefore 
$\toba(V_1) \simeq \toba(W_1) \underline{\otimes} \toba(W_2) \dots \underline{\otimes} \toba(W_1)$.
Hence
\begin{align}\label{eq:dim-poseydon}
\dim \toba(\pos(\bq,\ba))=2^{4t+|\mathcal{A}|}. 
\end{align}
\qed

\subsection{The presentation by generators and relations}\label{subsubsection:presentation-blocks-point}

\begin{prop}\label{generatores-relations-blocks-point}
The algebra $\toba(\pos(\bq,\ba))$ is presented by generators $x_i$, $i\in \Idd$, and relations
\begin{align}
&x_i^2=0,\qquad\quad  x_{i+\frac{1}{2}}^4=0,& & i\in \I_t,&\\
&x_{i+\frac{1}{2}}^2x_i+x_ix_{i+\frac{1}{2}}^2+x_ix_{i+\frac{1}{2}}x_i=0,&& i\in \I_t,&\\
&x_ix_{i+\frac{1}{2}}x_ix_{i+\frac{1}{2}}+x_{i+\frac{1}{2}}x_ix_{i+\frac{1}{2}}x_i=0,&&  i\in \I_t,&\\
&x_ix_j=q_{\lfloor i\rfloor\lfloor j\rfloor}x_jx_i,& &\lfloor i\rfloor \neq \lfloor j\rfloor  \in\I_t,&\\
&x_ix_{\theta}=q_{i\theta}x_{\theta}x_i,&& i\in \I_t,&\\
&\left(\ad_c x_{i+\frac{1}{2}}\right)^{1+b_i}(x_{\theta})=0,&& i\in \I_t,&\\
&\sch_{\bm}\sch_{\bn}=p_{\bm,\bn}\sch_{\bn}\sch_{\bm},&&\bm\neq \bn\in \mathcal{A},&\\
&\sch_{\bn}^2=0,& &\bn\in \mathcal{A}.&
\end{align}
A basis of $\toba(\pos(\bq,\ba))$ is given by
\[B=\{y_{1,m_{1}}x_{\frac{3}{2}}^{m_2}\ldots y_{t,m_{2t-1}}x_{t+\frac{1}{2}}^{m_{2t}}\prod\limits_{\bn\in \mathcal{A}}\sch_{\bn}^{b_{\bn}}\,:\, 0\leq b_{\bn}<2,\,\,0\leq m_i<4 \}.\]
Hence $\dim\toba(\pos(\bq,\ba))=2^{4t+|\mathcal{A}|}$. \qed
\end{prop}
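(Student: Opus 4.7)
The plan is to combine the splitting $\toba(\pos(\bq,\ba)) \simeq \toba(K^1)\,\#\,\toba(V_1)$ of \S\ref{subsubsection:splitting} with Lemmas \ref{lem1-blocks-point} and \ref{lem-basis-blocks-point}. Let $\widetilde{\toba}$ denote the associative algebra presented by the stated generators and relations. Since $\dim\toba(\pos(\bq,\ba)) = 2^{4t+|\mathcal{A}|}$ is already known from Theorem \ref{teo-blocks-point}, it suffices to show (i) that all the listed relations hold in $\toba(\pos(\bq,\ba))$, producing a surjection $\widetilde{\toba} \twoheadrightarrow \toba(\pos(\bq,\ba))$, and (ii) that the set $B$ spans $\widetilde{\toba}$.

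For (i), the first three families of relations are the restricted Jordan-plane relations of Theorem \ref{thm:jordan-plane-char2} applied to each block $W_i$. The cross-block commutations $x_ix_j = q_{\lfloor i\rfloor\lfloor j\rfloor}x_jx_i$ and the block--point commutations $x_ix_\theta = q_{i\theta}x_\theta x_i$ express that the corresponding braided commutators are primitive of degree $2$ and therefore vanish in the Nichols algebra; equivalently, they reflect the braided tensor product factorization $\toba(V_1)\simeq \toba(W_1)\,\underline{\otimes}\,\cdots\,\underline{\otimes}\,\toba(W_t)$ used in the proof of Theorem \ref{teo-blocks-point}. The truncation $(\ad_c x_{i+\frac{1}{2}})^{1+b_i}(x_\theta)=0$ translates via \eqref{eq2-lem} into $\sch_{(1+b_i)\mathbf{e}_i}=0$, which holds in $K^1$ by the very definition of $\mathcal{A}$ and $b_i$ in Lemma \ref{lem-basis-blocks-point}. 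The remaining two families are the defining relations of the quantum linear space $\toba(K^1)$, read off from Lemma \ref{lem-basis-blocks-point}(iii), since its generalized Dynkin diagram has all vertex labels equal to $1$ and no edges.

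For (ii), I would exploit the splitting to write $B$ as the concatenation of a PBW basis of $\toba(V_1) \simeq \toba(W_1)\,\underline{\otimes}\,\cdots\,\underline{\otimes}\,\toba(W_t)$ produced by Theorem \ref{thm:jordan-plane-char2} on each factor and the basis $(\sch_\bn)_{\bn\in\mathcal{A}}$ of $\toba(K^1)$ from Lemma \ref{lem-basis-blocks-point}(i). This yields $|B| = 2^{4t+|\mathcal{A}|}$; matched against the known dimension of $\toba(\pos(\bq,\ba))$, the surjection $\widetilde{\toba} \twoheadrightarrow \toba(\pos(\bq,\ba))$ is then forced to be an isomorphism and $B$ is a basis.

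The main obstacle will be making the spanning argument intrinsic to $\widetilde{\toba}$, namely checking that the listed relations alone suffice to rewrite every word in the generators into the form prescribed by $B$. Concretely, one must push every occurrence of $x_\theta$ to the right past all block generators (using the block--point commutations and the inductive definition \eqref{eq:defn-sch-several-blocks} to recognize nested brackets as $\sch_\bn$-monomials), sort the resulting block-part by block via the cross-block commutations, straighten within each block via the Jordan-plane relations, and finally order and reduce the $\sch$-monomial using the quantum linear space relations. This is a routine but careful diamond-lemma / normal-form induction whose only non-trivial ambiguities involve crossings of an $x_{i+\frac{1}{2}}$ with $x_\theta$; they are resolved by \eqref{eq2-lem} together with the truncation relations $\sch_{(1+b_i)\mathbf{e}_i}=0$, producing the required bound $|B|\le 2^{4t+|\mathcal{A}|}$.
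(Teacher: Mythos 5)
Your proposal is correct and follows essentially the same route as the paper, which states this proposition with no written proof precisely because it is meant to follow from the splitting $\toba(\pos(\bq,\ba))\simeq \toba(K^1)\,\#\,\toba(V_1)$, the Jordan-plane presentation of each block (Theorem \ref{thm:jordan-plane-char2}), the quantum-linear-space structure of $K^1$ given by Lemma \ref{lem-basis-blocks-point}, and the dimension count \eqref{eq:dim-poseydon}. The surjection--spanning--dimension scheme you outline, including the rewriting step you correctly flag as the only point requiring care, is exactly the argument of \cite[\S 7.2]{aah-triang} that the paper implicitly invokes.
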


\subsection{Realizations}\label{subsec:realizations-blocks-pt}
Let $H$ be a Hopf algebra, $(g_i, \chi_i, \eta_i)$, $i\in \I_{t}$,
a family of YD-triples and $(g_{\theta}, \chi_{\theta})$ a YD-pair for $H$, see \S \ref{subsec:realizations-block}. 
Let $(V, c)$ be a braided vector space  with braiding 
\eqref{eq:braiding-several-blocks-1pt}.
Then  
\begin{align}
\cV := \Big(\oplus_{i\in \I_{t}} \cV_{g_i}(\chi_i, \eta_i)\Big) \oplus  \ku_{g_\theta}^{\chi_\theta}  \in \ydh
\end{align}
is a \emph{principal realization} of $(V, c)$  over  $H$ if
\begin{align*}
q_{ij}&= \chi_j(g_i),& &i, j\in \I_{\theta};& a_j&= q_{j1}^{-1}\eta_j(g_j),\, j\in \I_{t}.
\end{align*}
Consequently, if $H$ is finite-dimensional,
then so is $\toba (\cV) \# H$. But the existence of such $H$ requires that all $q_{ij}$'s are roots of 1.
In this case, let $\Gamma=(\Z/N)^\theta$ where $N$ is even and divisible by $\ord q_{ij}$ for all $i,j$.
Then  $(V, c)$ is realized in $\ydG$ with action \eqref{eq:YD-structure-blocks-1point}.
 Thus $\toba(\pos(\bq,\ba))\# \Bbbk \Gamma$ is a pointed Hopf algebra of dimension $2^{4t+|\mathcal{A}|} N^\theta$.

\section{One pale block and one point}\label{sec:paleblock}
An indecomposable Yetter-Drinfeld module which is decomposable as brai\-ded vector space is called a \emph{pale block}
\cite{aam}; the simplest examples were studied in \cite{aah-triang,aah-oddchar}. 
We extend the analysis there to characteristic 2.

Let $(q_{ij})_{i,j \in \I_2}$ be a matrix with non-zero entries; we assume that $q_{11} =1$
and $q_{12}q_{21} = 1$; we set $\wp = q_{12}=q_{21}^{-1}$. 
Let $V = \eny_{\wp}(q_{22})$ be the braided vector space of dimension 3
with basis $(x_i)_{i\in\I_3}$ and braiding given by
\begin{align}\label{eq:braiding-paleblock-point}
(c(x_i \otimes x_j))_{i,j\in \I_3} &=
\begin{pmatrix}
 x_{1} \otimes x_{1}&   x_2  \otimes x_{1}& q_{12} x_3  \otimes x_{1}
\\
 x_{1} \otimes x_2 &  x_2  \otimes x_2& q_{12} x_3  \otimes x_2
\\
q_{21} x_{1} \otimes x_3 &  q_{21}(x_2 +  x_{1}) \otimes x_3& q_{22} x_3  \otimes x_3
\end{pmatrix}.
\end{align}

Let $V_{1} = \langle x_{1}, x_2\rangle$ (the pale block), 
$V_2 = \langle x_3\rangle$ (the point) and $\Gamma = \Z^2$ with a basis $g_{1}, g_2$.
Notice that $\toba(V_1)$ is a truncated symmetric algebra of dimension 4.
We realize $V$ in $\ydG$ by  $\deg x_{1} = \deg x_{2}=g_{1}$, $\deg x_3 = g_2$, 
\begin{align}
\label{eq:action-pale}
\begin{aligned}
&g_{1}\cdot x_{1} = x_{1},& &  g_{1}\cdot x_2 = x_2, & & g_{1}\cdot x_3 = q_{12} x_3,\\
&g_2\cdot x_{1} = q_{21}x_{1},& &  g_2\cdot x_2 = q_{21}(x_2 + x_{1}), & & g_2\cdot x_3 =  q_{22} x_3.
\end{aligned}
\end{align}

\begin{theorem}\label{teo-paleblock}
The Nichols algebra $\toba(\eny_{\wp}(q_{22}))$ is finite-dimensional if and only if $q_{22}=1$ or $q_{22}=\omega$, with $\omega\in \G'_3$.
\end{theorem}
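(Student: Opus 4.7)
The approach is to apply the splitting technique of \S\ref{subsubsection:splitting} to the decomposition $V = V_1 \oplus V_2$, with $V_1 = \langle x_1, x_2 \rangle$ the pale block and $V_2 = \langle x_3 \rangle$ the point. Since $\dim \toba(V_1) = 4$, we have $\toba(V) \simeq K \# \toba(V_1)$ and $\dim \toba(V) = 4 \dim K$, so the theorem reduces to deciding when $K = \toba(K^1)$ is finite-dimensional, where $K^1 = \ad_c \toba(V_1)(V_2)$.

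The first step is to describe $K^1$ concretely. I would introduce the candidates $z_{a,b} = (\ad_c x_1)^a (\ad_c x_2)^b(x_3)$ for $a,b \in \{0,1\}$ and, following the template of Lemmas \ref{le:-1bpz}--\ref{lemma:derivations-zn}, compute the $\ku\Gamma$-action using \eqref{eq:action-pale} and the skew derivations $\partial_i$ on each $z_{a,b}$ via \eqref{eq:skewderivations}. The key feature is that the Jordan part of $g_2$ on $V_1$ mixes the elements of matching bidegree, which should force linear dependences among the four candidate generators and produce an explicit (smaller) basis of $K^1$. The coaction \eqref{eq:coaction-K^1} on this basis is then computed following the same strategy as in Lemma \ref{le:zcoact}.

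Next, the above computations should show that $K^1$ is of diagonal type, so it remains to read off its generalized Dynkin diagram as a function of $q_{22}$. The assumption $q_{12}q_{21}=1$ causes the off-diagonal products $p_{ij}p_{ji}$ to collapse to powers of $q_{22}$, so the whole diagram is controlled by the single parameter $q_{22}$. The conclusion then follows by matching this diagram against the classification of finite-dimensional Nichols algebras of diagonal type of small rank in characteristic $2$ obtained in \cite{heck-wang,wang-rank3,wang-rank4}: $q_{22} = 1$ gives a diagram with all labels equal to $1$, yielding $\dim K = 2^2$; $q_{22} \in \G'_3$ gives a Cartan $A_2$ diagram of label $\omega$, yielding $\dim K = 3^3$; and every other value of $q_{22}$ produces a Dynkin diagram absent from those lists.

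The main technical obstacle I anticipate is the first step, namely the precise determination of $\dim K^1$ and of its $\ydG$-structure, since the Jordan action introduces nontrivial identifications among the $z_{a,b}$'s that must be tracked carefully. Once the Dynkin diagram of $K^1$ is explicitly in hand, the cited classifications close the argument in both directions simultaneously and identify the dimensions recorded in Table \ref{tab:main-result}.
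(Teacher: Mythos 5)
Your proposal is correct and follows essentially the same route as the paper: split off the pale block, use the skew derivations to show that $K^1$ is two-dimensional with basis $z_0=x_3$, $z_1=(\ad_c x_2)x_3$ and of diagonal type with vertex labels $q_{22}$ and edge label $q_{22}^2$, then invoke the rank-2 classification of \cite{heck-wang}. One small correction to your heuristic: the vanishing of the candidate generators involving $\ad_c x_1$ is not forced by the Jordan part of the $g_2$-action (that is precisely what keeps $z_1$ nonzero), but by $q_{12}q_{21}=1$ together with $\car \ku = 2$, which make all their skew derivations vanish.
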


To apply the splitting technique, see \S\ref{subsubsection:splitting}, we introduce the elements
\begin{align*}
&\sh_{m, n} = (\ad_{c} x_{1})^m(\ad_{c} x_{2})^n x_{3},& &w_m = \sh_{m,0},& &z_n=\sh_{0, n},& &\quad m,n\in \N_0.&
\end{align*}

By direct computation
\begin{align}\label{eq:paleblock-1}
g_{1}\, \cdot \, &\sh_{m, n}  =  q_{12} \sh_{m, n}, & g_2\cdot w_m  &=  q_{21}^{m}q_{22} w_m,\\[.2em]
\label{eq:paleblock-2}
z_{n+1} &= x_2z_n + q_{12} z_n x_2,& \sh_{m + 1, n} &= x_{1}\sh_{m, n} + q_{12}\sh_{m, n} x_{1},\\[.2em]
\label{eq:paleblock-3}
\partial_{1}(&\sh_{m, n})=\partial_2(\sh_{m, n})=0,&  \partial_3(w_{m})& =0, \text{ for all } m>0.
\end{align}

Since $x_{1}$ and $x_2$ commute, $\sh_{m, n}=(\ad_c x_2)^n(\sh_{m, 0})=(\ad_c x_2)^n(w_m)$. By \eqref{eq:paleblock-3} $w_m=0$ and thus $\sh_{m, n}=0$, for all $m>0$. Hence $\{z_n\,:\,n\in \N_0\}$ generates $K^1$. It is easy to check that
\begin{align}\label{aux-for}
&g_2\cdot z_n=q_{21}^nq_{22}z_n,\quad &  &\quad \partial_3(z_n)=x_{1}^n,\quad n\in \N_0.&
\end{align}
As $x_{1}^2=0$ we conclude that $\{z_0,z_{1}\}$ is a basis of $K^1$. The coaction is given by $\delta(z_0)=g_2\otimes z_0$ and $\delta(z_{1})=x_{1}g_2\otimes z_0+g_{1}g_2\otimes z_{1}$. From \eqref{aux-for} follows that $K^1$ is a braided vector space of diagonal type with braiding 
\[c(z_i\otimes z_j)=q^{j-i}_{21}q_{22}z_j\otimes z_i,\qquad i,j\in \I_{0,1}.\]

\noindent{\it Proof of Theorem \ref{teo-paleblock}.} If $q_{22}=1$, then the Dynkin diagram of $K^1$ is totally disconnected with vertices labelled with $q_{22}$. In this case $z_0^2=z_{1}^2=0$, $\dim\toba(K^{1})=4$ and so $\dim\toba(\eny_{\wp}(1))=2^4$.
If $q_{22}\neq 1$, the Dynkin diagram of $K^1$ is 

\centerline{\xymatrix{ &\circ^{q_{22}}\ar@{-}[r]^{q_{22}^2} & \circ^{q_{22}} .}}

\noindent By inspection in the list of \cite{heck-wang}, $\dim \toba(K^1)<\infty$ if and only if $q_{22}=\omega$, with $\omega\in \G'_3$. In this case, $\dim\toba(K^1)=3^3$ and so $\dim\toba(\eny_{\wp}(\omega))=2^{2}3^3$. \qed

\subsection{The presentation by generators and relations}\label{subsubsection:presentation-pale-block}

\begin{prop}\label{nich-pale-1} The algebra
 $\toba(\eny_{\wp}(1))$ is presented by generators $x_{1},x_2, x_3$ with defining relations
\begin{align}
&x_{1}^2= 0,\qquad  x_{2}^2=0,\qquad x_{1}x_2=x_2x_{1}, \\
x_{1}x_3&=\wp x_3x_{1},\quad z_{1}=x_2x_3+\wp x_3x_2\\
&x_3^2=0, \qquad z_{1}^2=0.
\end{align} 
The dimension of  $\toba(\eny_{\wp}(1))$ is $2^{4}$, since it has a PBW-basis
\begin{align*}
\{ x_{1}^{m_{1}}x_{2}^{m_2} z_{1}^{n_{1}}x_3^{n_0}\,:\, m_i,n_i \in\I_{0,1} \}.
\hspace{50pt} \qed
\end{align*} 
\end{prop}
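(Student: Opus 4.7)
My plan is to follow the same two-step strategy as in the proofs of Propositions \ref{nich-ex1} and \ref{nich-ex2}: verify that the listed relations hold in $\toba(\eny_{\wp}(1))$, and then bound the dimension of the algebra presented by them by $2^4 = \dim \toba(\eny_{\wp}(1))$, the latter equality coming from Theorem \ref{teo-paleblock}.

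First I verify the relations. The relations $x_1^2 = x_2^2 = 0$ and $x_1 x_2 = x_2 x_1$ hold because $\toba(V_1)$ is the truncated symmetric algebra of dimension $4$, as noted immediately before Theorem \ref{teo-paleblock}. The relation $x_1 x_3 = \wp x_3 x_1$ is precisely the identity $w_1 = (\ad_c x_1)(x_3) = 0$ established in the proof of Theorem \ref{teo-paleblock} (invoking $-1 = 1$ in $\ku$). The element $z_1 = x_2 x_3 + \wp x_3 x_2$ is simply a definition, and $x_3^2 = z_0^2 = 0$ and $z_1^2 = 0$ come from the analysis of $K^1$ as a diagonal braided vector space with $q_{22} = 1$ in the same proof.

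Next let $A$ be the algebra presented by the listed generators and relations, and set $z_1 := x_2 x_3 + \wp x_3 x_2 \in A$. Using $x_1 x_2 = x_2 x_1$, $x_1 x_3 = \wp x_3 x_1$, and $x_2^2 = x_3^2 = 0$, a short calculation yields the derived commutations
\begin{align*}
z_1 x_1 &= \wp^{-1} x_1 z_1, & z_1 x_2 &= \wp^{-1} x_2 z_1, & z_1 x_3 &= \wp\, x_3 z_1.
\end{align*}
Combining these with $x_2 x_3 = z_1 + \wp x_3 x_2$ (valid in characteristic $2$) and the square-zero relations $x_i^2 = z_1^2 = 0$, every word in the generators $x_1, x_2, x_3$ can be rewritten as a linear combination of the ordered monomials $x_1^{m_1} x_2^{m_2} z_1^{n_1} x_3^{n_0}$ with $m_i, n_i \in \I_{0,1}$. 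Hence $\dim A \leq 2^4$. Since the defining relations hold in $\toba(\eny_{\wp}(1))$ there is a surjection $A \twoheadrightarrow \toba(\eny_{\wp}(1))$; comparing with $\dim \toba(\eny_{\wp}(1)) = 2^4$ forces this surjection to be an isomorphism, and the PBW statement follows.

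The main technical point is the reordering argument inside $A$, in particular for words that alternate $x_2$ and $x_3$ many times. Introducing $z_1$ as a PBW generator with the derived commutations above makes this routine: each application of $x_2 x_3 = z_1 + \wp x_3 x_2$ produces the already-ordered piece $\wp x_3 x_2$ together with a $z_1$-piece that, via the derived relations, commutes (up to a scalar) with every other generator and can therefore be slid into its canonical slot. Termination of this rewriting process is ensured by $x_i^2 = z_1^2 = 0$.
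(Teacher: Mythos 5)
Your proposal is correct and follows essentially the same route the paper (implicitly) takes: the relations and the dimension $2^4$ both come from the splitting $\NA(V)\simeq \NA(K^1)\#\NA(V_1)$ established in the proof of Theorem \ref{teo-paleblock}, and the presentation is then confirmed by the standard surjection-plus-dimension-count argument. Your explicit verification of the derived commutations $z_1x_1=\wp^{-1}x_1z_1$, $z_1x_2=\wp^{-1}x_2z_1$, $z_1x_3=\wp x_3z_1$ and the resulting spanning of the ordered monomials is accurate and usefully fills in the rewriting step the paper omits.
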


\begin{prop}\label{nich-pale-2} Let $z_{01}:=\ad_c x_3(z_{1})$. The algebra
 $\toba(\eny_{\wp}(\omega))$ is presented by generators $x_{1},x_2, x_3$ with defining relations
\begin{align}
&x_{1}^2= 0,\qquad  x_{2}^2=0,\qquad x_{1}x_2=x_2x_{1}, \\
x_{1}x_3&=\wp x_3x_{1},\quad z_{1}=x_2x_3+\wp x_3x_2\\
&x_3^3=0, \qquad z_{1}^{3}=0.\\
&z_{01}^3=0, \qquad (\ad_{c} x_3)^{2}(z_{1})=0.
\end{align} 
The dimension of  $\toba(\eny_{\wp}(\omega))$ is $2^{2}3^3$, since it has a PBW-basis
\begin{align*}
\{ x_{1}^{m_{1}}x_{2}^{m_2} z_{1}^{n_2}z_{01}^{n_{1}}x_3^{n_0}\,:\, m_i \in\I_{0,1},\, n_i\in\I_{0,2} \}.
\hspace{50pt} \qed
\end{align*} 
\end{prop}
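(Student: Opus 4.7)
The plan is to apply the splitting technique of \S\ref{subsubsection:splitting} as in the proof of Theorem \ref{teo-paleblock}. That proof decomposes $\toba(\eny_{\wp}(\omega)) \simeq \toba(K^1) \# \toba(V_1)$, where $\toba(V_1) = \ku[x_1, x_2]/(x_1^2, x_2^2)$ is the truncated symmetric algebra of dimension $4$ (the braiding on $V_1 \otimes V_1$ is the identity and $\car\ku = 2$), and $\toba(K^1)$ is the rank-two diagonal Nichols algebra on generators $x_3, z_1$ whose Dynkin diagram has two vertices labelled $\omega$ joined by an edge labelled $\omega^2$. Since $\omega \in \G'_3$, this is Cartan type $A_2$ with parameter $\omega$, so $\dim \toba(K^1) = 27$ with PBW basis $\{z_1^{n_2} z_{01}^{n_1} x_3^{n_0} : n_i \in \I_{0,2}\}$, where $z_{01} = (\ad_c x_3)(z_1)$ is the root vector attached to the highest root.

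Next I would verify that each relation listed in the proposition holds in $\toba(\eny_{\wp}(\omega))$. The relations $x_1^2 = x_2^2 = 0$ and $x_1 x_2 = x_2 x_1$ present $\toba(V_1)$. The relation $x_1 x_3 = \wp x_3 x_1$ rewrites as $(\ad_c x_1)(x_3) = 0$, which is the identity $w_1 = 0$ established in the proof of Theorem \ref{teo-paleblock}. The equation $z_1 = x_2 x_3 + \wp x_3 x_2$ is the definition of $z_1$. Finally, the cube relations $x_3^3 = z_1^3 = z_{01}^3 = 0$ together with the Serre-type relation $(\ad_c x_3)^2(z_1) = 0$ are the defining relations of the $A_2$-type Nichols algebra $\toba(K^1)$, which embed naturally in $\toba(\eny_{\wp}(\omega))$ via the splitting isomorphism.

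Let $A$ denote the algebra with the stated presentation; the previous step yields a surjection $\pi : A \twoheadrightarrow \toba(\eny_{\wp}(\omega))$. It remains to show that the proposed set $B = \{x_1^{m_1} x_2^{m_2} z_1^{n_2} z_{01}^{n_1} x_3^{n_0} : m_i \in \I_{0,1},\, n_i \in \I_{0,2}\}$ spans $A$. I would do this in two stages: first move all $x_1$'s and $x_2$'s to the left using commutation formulas derived from the listed relations, then normalize the remaining products of $x_3, z_1, z_{01}$ to the $A_2$-type PBW monomials. Since $|B| = 2^2 \cdot 3^3 = 108 = \dim \toba(\eny_{\wp}(\omega))$ by Theorem \ref{teo-paleblock}, the surjection $\pi$ must be an isomorphism and $B$ is a basis.

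The main obstacle is the first stage of the reduction: deriving, from the listed relations alone, normal-form rules for moving $x_2$ past $x_3$, $z_1$, $z_{01}$, since only the definition $z_1 = x_2 x_3 + \wp x_3 x_2$ directly mixes $x_2$ with the generators of $\toba(K^1)$. Together with $x_2^2 = 0$, this equation can be iterated to extract identities of the form $x_2 z_1 = \wp^2 z_1 x_2 + (\text{terms already in normal form})$ and similarly for $x_2 z_{01}$, whose validity one should check against the explicit Yetter-Drinfeld action of $V_1$ on $\toba(K^1)$ recorded in \eqref{eq:action-pale} and the skew-derivation machinery of \eqref{eq:skewderivations}.
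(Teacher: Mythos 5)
Your proposal is correct and follows essentially the same route the paper intends: the splitting $\toba(\eny_{\wp}(\omega))\simeq \toba(K^1)\#\toba(V_1)$ from the proof of Theorem \ref{teo-paleblock}, the identification of $K^1$ as Cartan type $A_2$ with parameter $\omega$ (hence $\dim\toba(K^1)=3^3$ with the stated root-vector PBW basis), and a dimension count against the abstractly presented algebra. One small correction to your normal-form template: from $z_1=x_2x_3+\wp x_3x_2$ and $x_2^2=0$ one gets $x_2z_1=\wp\,z_1x_2$ (coefficient $\wp$, not $\wp^2$), consistent with $z_2=(\ad_c x_2)z_1=0$.
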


\subsection{Realizations}\label{subsec:realizations-paleblock-pt}
Assume that $\wp$ is a root of 1 of odd order $M$.
Take $\Gamma = \langle g_{1} \rangle \times \langle g_2 \rangle$ where $g_{1}$ has order $M$ and $g_2$ has order 
$2M$. 
We realize $\eny_{\wp}(1)$ in $\ydG$ by  $\deg x_{1} = \deg x_{2}= g_{1}$, $\deg x_3 = g_2$ and action
\eqref{eq:action-pale}. Then $\toba \big(\eny_{\wp}(1)\big) \# \ku \Gamma$ is a pointed Hopf algebra of dimension $2^5 M^2$.

Also, let $\Upsilon = \langle h_{1} \rangle \times \langle h_2 \rangle$ where 
$h_{1}$ has order $M$ and $h_2$ have order $P := \lcm (6,M)$. 
We realize $\eny_{\wp}(\omega)$ in $\ydU$ by  $\deg x_{1} = \deg x_{2} = h_{1}$, $\deg x_3 = h_2$ and action as in 
\eqref{eq:action-pale} with $h_i$'s instead of the $g_i$'s. 
Then $\toba \big(\eny_{\wp}(\omega)\big) \# \ku \Upsilon$ is a pointed Hopf algebra of dimension $2^3 3^3 MP$.


\begin{thebibliography}{AAH3} 

\bibitem{a-leyva} N. Andruskiewitsch. \emph{An Introduction to Nichols Algebras}. In Quantization, Geometry and Noncommutative Structures in Mathematics and Physics. 
A. Cardona, P. Morales, H. Ocampo, S. Paycha, A. Reyes, eds., pp. 135--195, Springer (2017).

\bibitem{aah-triang} N. Andruskiewitsch,  I. Angiono, I. Heckenberger,
\emph{On finite GK-dimensional Nichols algebras over abelian groups}. Mem. Amer. Math. Soc., to appear.

\bibitem{aah-diag} \bysame.
\emph{On finite GK-dimensional Nichols algebras of diagonal type}. Contemp. Math. \textbf{728}, 1--23 (2019).

\bibitem{aah-oddchar} \bysame.
\emph{Examples of finite-dimensional pointed Hopf algebras in positive characteristic}. In
\emph{Representation Theory, Mathematical Physics and Integrable Systems, in honor of Nicolai Reshetikhin}, eds.  A. Alexeev, \emph{et al.} Progr. Math. (2020).

\bibitem{aam} N. Andruskiewitsch,  I. Angiono,  M. Moya Giusti. \emph{Nichols algebras of pale blocks}. 
In preparation.

\bibitem{clw} C. Cibils, A. Lauve, S. Witherspoon. \emph{Hopf quivers and Nichols algebras in positive characteristic}. Proc. Amer. Math. Soc. \textbf{137} (12), 4029--4041 (2009) .

\bibitem{h-classif}  I. Heckenberger.  \emph{Classification of arithmetic root systems}. Adv. Math. \textbf{220},
59--124 (2009).

\bibitem{heck-sch} I. Heckenberger and H.-J  Schneider.
\emph{Yetter-Drinfeld modules over bosonizations of dually paired Hopf algebras}. Adv. Math. \textbf{244}, 354--394 (2013).

\bibitem{heck-wang} I. Heckenberger, J. Wang. 
\emph{Rank 2 Nichols algebras of diagonal type over fields of positive characteristic}. SIGMA, Symmetry Integrability Geom. Methods Appl. \textbf{11}, Paper 011, 24 p. (2015).


\bibitem{wang-rank3} J. Wang. \emph{Rank three Nichols algebras of diagonal type over arbitrary fields}. Isr. J. Math. \textbf{218}, 1--26 (2017).

\bibitem{wang-rank4} \bysame. \emph{Rank 4 finite-dimensional Nichols algebras of diagonal type in positive characteristic}. J. Algebra. \textbf{559}, 547--579 (2020).

\bibitem{radford-book} D. E. Radford. \emph{Hopf algebras}. Series on Knots and Everything 49. Hackensack, NJ: World Scientific. xxii, 559 p.  (2012).

\end{thebibliography}
\end{document}